\def\R{\mathbb{R}}
\newcommand{\expn}{\operatorname{e}}
\newcommand{\vect}{\operatorname{vec}}
\newcommand{\im}{\operatorname{im}}
\newcommand{\cp}{\operatorname{cp}}
\newcommand{\abs}[1]{\left\lvert #1 \right\rvert}
\newcommand{\beq}{\begin{equation}}
\newcommand{\eeq}{\end{equation}}
\newcommand {\mat}      [1] {\left[\begin{array}{#1}}
\newcommand {\rix}          {\end{array}\right]}
\newcommand {\smat}      [1] {\left[\begin{smallmatrix}{#1}}
\newcommand {\srix}          {\end{smallmatrix}\right]}
\newcommand {\s}      [1] {\begin{smallmatrix}{#1}}
\newcommand {\se}          {\end{smallmatrix}}
\newcommand{\trace}{\operatorname{tr}}
\newtheorem{defn}{Definition}[section]
\newtheorem{remark}{Remark}
\newtheorem{lem}[defn]{Lemma}
\newtheorem{prop}[defn]{Proposition} 
\newtheorem{thm}[defn]{Theorem}
\title{Low-dimensional approximations of high-dimensional asset price models}
\author{Martin Redmann\thanks{Martin Luther University Halle-Wittenberg, Institute of Mathematics, Theodor-Lieser-Str. 5, 06120 Halle (Saale), Germany (Email: {\tt 
martin.redmann@mathematik.uni-halle.de})}.
\and Christian Bayer\thanks{Weierstraß Institute for Applied Analysis and Stochastics, Mohrenstr. 39, 10117 Berlin, Germany (Email: {\tt christian.bayer@wias-berlin.de})}
\and Pawan Goyal\thanks{Max Planck Institute for Dynamics of Complex Technical Systems, Sandtorstr. 1, 39106 Magdeburg, Germany (Email: {\tt goyalp@mpi-magdeburg.mpg.de})}}
\begin{document}

\maketitle

\begin{abstract}
We consider high-dimensional asset price models that are reduced in their dimension in order to reduce the complexity of the problem or the effect of the curse of dimensionality in the context of option pricing.
We apply model order reduction (MOR) to obtain a reduced system. MOR has been previously studied for asymptotically stable controlled stochastic systems with zero initial conditions.
However, stochastic differential equations modeling price processes 
are uncontrolled, have non-zero initial states and are often unstable. Therefore, we extend MOR schemes and combine ideas of techniques known for deterministic systems. This leads to a method 
providing a good pathwise approximation. After explaining the reduction procedure, the error of the approximation is analyzed and the performance of the algorithm is shown conducting several numerical experiments. Within the 
numerics section, the benefit of the algorithm in the context of option pricing is pointed out.
\end{abstract}

\begin{keywords}
 Model order reduction, Black Scholes model, Heston model, option pricing
\end{keywords}

\begin{AMS}
Primary:  91G20, 91G60, 93A15  Secondary:  60H10, 65C30
\end{AMS}

\pagestyle{myheadings}
\thispagestyle{plain}
\markboth{M. REDMANN, C. BAYER, AND P. GOYAL}{LOW-DIMENSIONAL APPROX. OF HIGH-DIMENSIONAL ASSET PRICE MODELS}



\section{Introduction}

In finance we often encounter high-dimensional models, since the
underlying markets are usually high-dimensional. For instance, in equity, take
all stocks comprising the S \& P 500 index (SPX). Fixed income markets exhibit a
myriad of different relevant interest rates. All these are, obviously, only
small snapshots of even larger markets. Of course, in many situations, we are
only interested in a tiny fraction of these markets, which  can be adequately
modeled by a low-dimensional stochastic process. Moreover, if we are interested in
derivatives on SPX, for example, then we may just model the index itself,
disregarding the fine structure. On the other hand, if we consider a larger
portfolio, this may not be possible without introducing
inconsistencies in the model.

From a numerical perspective, high-dimensional models pose severe
difficulties. Indeed, many traditional computational tools suffer from the
\emph{curse of dimensionality}, which essentially states that the
computational work required to compute the relevant quantity of interest up
to a prescribed error tolerance grows exponentially in the dimension $n$ of
the model. Most methods for discretizing partial differential equations (such as finite element and finite difference methods) suffer from the
curse of dimensionality, as do Fourier based methods. Even many deterministic
sampling methods (i.e., tree methods, quasi Monte Carlo) suffer from the curse of
dimensionality in one way or another.\footnote{It is worth pointing
  out, that the effective dimension for QMC and MC method is, in fact, $n$
  multiplied by the number of time-steps, if time discretization is needed.}
The notable exception is, of course, Monte Carlo simulation.

One way to overcome the numerical burden in high-dimensional models is \emph{Model order reduction} (MOR) \cite{morAnt05, antoulas2020interpolatory, benner2017model}. MOR is a 
technique in numerical analysis in order to construct low dimensional
surrogate models that allow to approximate the quantity of interest with the desired
accuracy. As MOR takes the specific quantity of interest into account, reduced
models for different options will generally be different. (The
specific method introduced later will, however, not depend, e.g., on the
specific strike price.)

To fix ideas, suppose that we are given an $n$ dimensional stochastic
volatility model with asset price processes $S(t) \in \R^n$ and the
corresponding stochastic variance processes $v$ -- which will be
one-dimensional in our numerical examples. Consider an option with
payoff $g(CS)$, where $C \in \R^{p \times n}$ and $g: \R^p \to \R$ possibly
non-linear, where we assume that $p \ll n$. For instance, we have $p = 1$ for basket options. Our goal is to construct a Markov process
$\tilde{x}$ taking values in $\R^{\tilde n}$ -- with $\tilde n \ll n$ -- and a matrix $C_1 \in
\R^{p \times \tilde n}$ such that the processes $CS$ and $C_1 \tilde{x}$ are close in
$L^2$. This usually ensures a good approximation of the payoff, i.e., $g(CS) \approx g(C_1\tilde x)$.
In this paper, we present a general strategy for identifying such
processes $\tilde{x}$. We also provide numerical evidence of successful MOR in
several financial applications, in the sense that relative errors of the order
of $10^{-4}$ are regularly achieved with very small $\tilde n$ even when $n \ge 100$.
It should be noted here that we only propose a MOR technique for the asset process $S$ in
this paper, but not for the variance process $v$. This is due to the generally
non-linear dynamics of the variance process, which would require more
complicated MOR strategies and will be explored in future work.

Before explaining the MOR strategy in detail, some conclusions can
already be made based on the fundamental idea.
First note that MOR should not be confused with \emph{Markovian projection},
see \cite{brunick2013mimicking, gyongy1986mimicking}. The underlying problem
is, of course, that the process $CS$ itself is a natural candidate for a reduced model,
but it generally lacks the Markov property. There is, however, a Markov
process $\hat{x}$ taking values in $\R^p$ such that $CS(t)$ and $\hat{x}(t)$
have the same distribution for every $t$. This means that \emph{European} option
prices based on $\hat{x}$ correspond exactly to the prices in the full
model. The coefficients of $\hat{x}$ are, however, not trivial to
obtain. Nonetheless, there has been continuous interest in the financial
community in applications of Markovian projections, see, for instance,
\cite{hambly2016forward} and \cite{bayer2019implied} for two recent examples.

In contrast, the surrogate model $\tilde{x}$ is often easier to construct than
the Markovian projection $\hat{x}$. Moreover, our construction provides
that $\tilde{x}$ is close to $CS$ on path-space, which directly allows the
application to American option pricing. This comes at the price of being only
an approximation, though. Moreover, MOR may provide good low dimensional surrogate
models even in situations when there is no natural low-dimensional
intermediate process, i.e., when $p \approx n$ as of above.

\begin{remark}
  Generally, the surrogate model $\tilde{x}$ does not have any specific financial
  interpretation. Hence, its only justification is the approximation quality
  with respect to~the quantity of interest.
\end{remark}
Additionally, there are many dimension reduction techniques in the
computational finance literature working at the level of a numerical
approximation rather than the model itself, i.e., in contrast to MOR or
Markovian projections, no lower-dimensional model is ever considered, but
rather the dimensions of certain numerical approximations are reduced. A good,
clarifying example might be the use of sparse grid approximations for
numerical integration or solving partial differential equations. The
underlying observation is, of course, that a one-dimensional grid of size $N$ is
turned into a ``tensor-product'' grid of size $N^n$ in dimension $n$. This
explosion can often be avoided by a careful choice of a \emph{sparse}
``subgrid''. Indeed, under suitable (often quite demanding) regularity conditions,
similar accuracy can be achieved with sparse grids of size
asymptotically proportional to $N \log(N)^n$. We refer to
\cite{bungartzgriebel04} for a general exposition of sparse grid methods, in
particular for solving PDEs, and to \cite{BST18,gerstner} for applications of
sparse grid quadrature methods in finance. If sparse grids are used to
discretize pricing PDEs, for example, then the dimensions of the resulting
system of linear equations is drastically reduced as compared to the tensor
product grid by using available low-dimensional structures in the discretized equation.
In contrast, MOR identifies lower-dimensional effective structures already in
the continuous model, before any discretization, allowing us to use less
sophisticated numerical methods in a low-dimensional surrogate models, in
which all components are actually important.

A somewhat complementary method for solving high-dimensional problems in
finance is based on machine learning, in particular deep neural
networks. These methods are praised for their ability to handle very high
dimensional problems, seemingly breaking the curse of dimensionality. Hence, they
often offer effective alternative computational approaches, even without
explicit dimension reduction. We refer to \cite{BGTW19,HJE18} for two recent
examples of applications in computational finance.

\paragraph{Outline of the paper}

After setting the stage in Section~\ref{stochstabgen}, we introduce techniques
to provide quantitative estimates of ``importance'' of projections of the
state for the dynamics of the process in Section~\ref{sec:reach}. These
quantitative estimates are then used in Section~\ref{subsecprocedure} to
 identify especially efficient reduced models. We continue to provide
error bounds in Section~\ref{h2boundsec}. Numerical experiments are reported
in Section~\ref{numerics}, followed by concluding remarks in
Section~\ref{sec:conclusions}. Some general definitions and technical proofs
are presented in an appendix.

\section{Setting and covariance functions}\label{stochstabgen}

Let $W=\left(W_1, \ldots, W_q\right)^T$ be an $\mathbb R^q$-valued with mean zero Wiener process with covariance matrix $\mathbf K=(k_{ij})$, i.e., $\mathbb E[W(t)W^T(t)]=\mathbf K t$ for $t\in [0, T]$, where $T>0$ is the terminal time.
Suppose that $W$ and all stochastic process appearing in this paper are defined on a filtered probability space  $\left(\Omega, \mathcal F, (\mathcal F_t)_{t\in [0, T]}, 
\mathbb P\right)$\footnote{$(\mathcal F_t)_{t\in [0, T]}$ shall be right continuous and complete.}. In addition, we assume $W$ to be $(\mathcal F_t)_{t\in [0, T]}$-adapted and the 
increments $W(t+h)-W(t)$ to be independent of $\mathcal F_t$ for $t, h\geq
0$. We consider the following large-scale Heston type model:
\begin{subequations}\label{original_system}
\begin{align}\label{stochstatenew}
             dx(t)&=Ax(t)dt+\sum_{i=1}^q \sqrt{v(t)}N_i x(t) dW_i(t),\quad x(0)=x_0=B z,\\ \label{output}
             y(t)&= C x(t), \quad t\in[0, T],
            \end{align}
            \end{subequations}
where $A, N_i\in \mathbb R^{n\times n}$ and $C\in \mathbb R^{p\times n}$. Moreover, the set of initial conditions, in which we are interested, is 
spanned by the columns of a matrix $B\in \mathbb R^{n\times m}$, i.e., there
is a vector $z\in \mathbb R^{m}$ such that $x_0= B z$. This assumption allows to construct a reduced-order system that performs well for several initial states. 
However, there are many financial applications, where only a single $x_0$ is of interest. Then, we have $B=x_0$ and $z=1$. 
The scalar $(\mathcal F_t)_{t\in[0, T]}$-adapted stochastic process $\left(v(t)\right)_{t\in[0, T]}$ is non-negative, $\mathbb P$-a.s. bounded from above 
by a constant $c>0$ and called variance process. The variance process is assumed to be bounded for theoretical considerations below. Practically, boundedness is less relevant.
The state dimension $n$ is assumed to be large and the quantity of interest $y$ is rather low-dimensional, i.e., $p\ll n$.\smallskip

Below, the dependence of the state variable on $x_0$ is sometimes indicated by writing $x(t; x_0)$, $t\in [0, T]$, for the solution to (\ref{stochstatenew}). Furthermore, we write 
$M_1\leq M_2$ for two symmetric matrices $M_1$ and $M_2$ if $M_2-M_1$ is symmetric positive semidefinite. 
In order to identity the important states in system (\ref{original_system}), the covariance function and an upper bound for the covariance will be of interest. Therefore, we formulate the following lemmas.
\begin{lem}\label{lemforstab}
The matrix-valued function $\mathbb{E} \left[x(t; x_0)x^T(t; x_0)\right]$, $t\in [0, T]$, is a solution to 
\begin{align}\label{matrixinequal}
\dot X(t)\leq AX(t)+X(t)A^T+c \sum_{i, j=1}^q N_i X(t) N_j^T k_{ij},\quad X(0)=x_0 x_0^T,
    \end{align}
where $k_{ij}$ is the $ij$th entry of the covariance matrix $\mathbf K$.\end{lem}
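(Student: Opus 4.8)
The plan is to apply the Itô product rule to the matrix-valued process $t\mapsto x(t)x^T(t)$ and then take expectations. Suppressing the dependence on $x_0$ and using \gref{stochstatenew}, we get
\begin{align*}
d\bigl(x x^T\bigr) &= \bigl(A x x^T + x x^T A^T\bigr)\,dt + \sum_{i=1}^q \sqrt{v}\,\bigl(N_i x x^T + x x^T N_i^T\bigr)\,dW_i\\
&\quad + v\sum_{i,j=1}^q k_{ij}\,N_i x x^T N_j^T\,dt,
\end{align*}
where the last (drift) term comes from the quadratic covariation $d[W_i,W_j] = k_{ij}\,dt$, recalling $\mathbb E[W(t)W^T(t)] = \mathbf K t$. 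The standard a priori estimate $\sup_{t\in[0,T]}\mathbb E\|x(t)\|^2 < \infty$ for linear SDEs applies here, since the diffusion coefficients $\sqrt{v}\,N_i$ are bounded by the $\mathbb P$-a.s.\ bound $v\le c$; hence the $dW_i$-integral is a true martingale and vanishes in expectation. Writing $X(t) := \mathbb E[x(t;x_0)x^T(t;x_0)]$ and $M(t) := \sum_{i,j=1}^q k_{ij}N_i x(t)x^T(t)N_j^T$, taking expectations of the integrated identity yields $X(t) = x_0 x_0^T + \int_0^t\bigl(AX(s)+X(s)A^T+\mathbb E[v(s)M(s)]\bigr)\,ds$, and since the integrand is bounded and measurable, $X$ is (absolutely continuous, hence a.e.) differentiable with $\dot X(t) = AX(t) + X(t)A^T + \mathbb E[v(t)M(t)]$.

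It then remains to bound $\mathbb E[v(t)M(t)] \le c\sum_{i,j}k_{ij}N_i X(t)N_j^T$ in the order $\le$ of symmetric matrices introduced above. The key point is that $M(s)$ is pathwise positive semidefinite: for $\xi\in\R^n$ put $a_i := \xi^T N_i x(s)$, so that $\xi^T M(s)\xi = \sum_{i,j}k_{ij}a_ia_j = a^T\mathbf K a \ge 0$, because the covariance matrix $\mathbf K$ is positive semidefinite. Combining this with $0\le v(s)\le c$ $\mathbb P$-a.s.\ gives $(c-v(s))M(s)\ge 0$, i.e.\ $v(s)M(s)\le cM(s)$ pathwise; since expectation preserves this ordering, $\mathbb E[v(t)M(t)] \le c\,\mathbb E[M(t)] = c\sum_{i,j}k_{ij}N_i X(t)N_j^T$. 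Substituting into the identity for $\dot X$ yields \gref{matrixinequal}, and the initial condition $X(0)=x_0x_0^T$ is immediate from $x(0)=x_0$.

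I expect the only genuinely delicate part to be the stochastic-analysis bookkeeping: verifying the second-moment bound so that the martingale term has zero expectation, and justifying that the (merely measurable, bounded) map $s\mapsto\mathbb E[v(s)M(s)]$ makes $X$ differentiable enough for the differential formulation to be meaningful. The algebraic core — that positive semidefiniteness of $\mathbf K$ makes $M$ pathwise positive semidefinite and thereby upgrades the scalar bound $v\le c$ to the claimed matrix inequality — is straightforward once the Itô expansion is in place.
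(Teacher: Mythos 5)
Your proof is correct and follows essentially the same route as the paper's: Itô's product rule applied to $x(t)x^T(t)$, positive semidefiniteness of $\sum_{i,j}k_{ij}N_i x x^T N_j^T$ deduced from the positive semidefiniteness of $\mathbf K$ (you use the quadratic form $a^T\mathbf K a\ge 0$ where the paper factors $\mathbf K=\mathbf K^{1/2}\mathbf K^{1/2}$, which is the same argument), followed by the scalar bound $v\le c$ and taking expectations. The only difference is cosmetic ordering — you take expectations before bounding $v$ by $c$, and you are somewhat more explicit about the martingale property and the differentiability of $t\mapsto\mathbb E[x(t)x^T(t)]$ — neither of which changes the substance.
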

\begin{proof}
The proof is given in Appendix \ref{secprooflem2}.
\end{proof}\\
We denote the solution to (\ref{stochstatenew}) by $x_c$ if the process $v$ is replaced by its upper bound $c$ ($v\equiv c$). 
We call (\ref{original_system}) Black Scholes model in case the volatility is constant. The covariance function of $x_c$ can be derived through the identity given in the following lemma.
\begin{lem}\label{lemmatrixeq}
The matrix-valued function $\mathbb{E} \left[x_c(t; x_0)x_c^T(t; x_0)\right]$, $t\in [0, T]$, satisfies 
\begin{align}\label{matrixequal}
{\dot X}_c(t) = AX_c(t)+X_c(t)A^T+c \sum_{i, j=1}^q N_i X_c(t) N_j^T k_{ij},\quad X_c(0)=x_0 x_0^T,
    \end{align}
\end{lem}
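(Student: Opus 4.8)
The plan is to apply Itô's formula to the matrix-valued semimartingale $t\mapsto x_c(t)x_c^T(t)$ and then take expectations; this is the same scheme as for Lemma~\ref{lemforstab}, only now equality is retained throughout because the volatility is the fixed constant $c$ rather than the bounded adapted process $v$. Recall that $x_c$ solves the linear SDE $dx_c(t)=Ax_c(t)\,dt+\sqrt{c}\sum_{i=1}^q N_i x_c(t)\,dW_i(t)$ with $x_c(0)=x_0$. Before anything else I would record the moment bound $\sup_{t\in[0,T]}\mathbb E\|x_c(t)\|^2<\infty$, which follows from a Gronwall argument applied to $t\mapsto\mathbb E\|x_c(t)\|^2$ since $A$, the $N_i$ and $c$ are all constant; this is exactly the estimate established in Appendix~\ref{secprooflem2}. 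In particular $X_c(t):=\mathbb E[x_c(t)x_c^T(t)]$ is well defined on $[0,T]$.

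Next I would compute the differential. Using that the Wiener process $W$ with covariance $\mathbf K t$ has mutual quadratic variation $d\langle W_i,W_j\rangle_t=k_{ij}\,dt$, the Itô product rule applied componentwise to $x_c^{(k)}x_c^{(l)}$ and reassembled into matrix form gives
\[
d\bigl(x_c(t)x_c^T(t)\bigr)=\bigl(Ax_c(t)x_c^T(t)+x_c(t)x_c^T(t)A^T\bigr)\,dt+dM(t)+c\sum_{i,j=1}^q N_i x_c(t)x_c^T(t)N_j^T k_{ij}\,dt,
\]
where the first term is the drift of $(dx_c)x_c^T+x_c(dx_c)^T$, the term $dM(t)=\sqrt{c}\sum_{i=1}^q\bigl(N_i x_c x_c^T+x_c x_c^T N_i^T\bigr)dW_i(t)$ collects the stochastic integrals, and the last term is the quadratic-covariation contribution $\bigl(\sqrt{c}\sum_i N_i x_c\,dW_i\bigr)\bigl(\sqrt{c}\sum_j N_j x_c\,dW_j\bigr)^T$. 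Taking expectations, $\mathbb E[M(t)]=0$ because the moment bound makes the integrand of $M$ square-integrable on $\Omega\times[0,T]$, and Fubini lets the expectation pass through the $dt$-integral, so
\[
X_c(t)=x_0x_0^T+\int_0^t\Bigl(AX_c(s)+X_c(s)A^T+c\sum_{i,j=1}^q N_i X_c(s)N_j^T k_{ij}\Bigr)\,ds .
\]
Since $s\mapsto X_c(s)$ is continuous, being an indefinite integral, the integrand is continuous and the fundamental theorem of calculus gives $X_c\in C^1([0,T])$ with derivative \gref{matrixequal}; the initial condition is immediate.

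The only genuine obstacle is the justification that the stochastic-integral term has zero expectation and that expectation commutes with the time integral — both of which reduce to the a priori bound $\sup_{t\le T}\mathbb E\|x_c(t)\|^2<\infty$. Once that is in hand (and it is strictly easier here than in Lemma~\ref{lemforstab}, where one must additionally bound $v$ from above by $c$ and use positive semidefiniteness of $\sum_{i,j=1}^q k_{ij}N_i x_c x_c^T N_j^T$ to turn the identity into an inequality), everything else is routine bookkeeping with Itô's formula.
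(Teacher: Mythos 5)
Your argument is correct in substance and is, in effect, the same computation the paper performs in Appendix \ref{secprooflem2} for Lemma \ref{lemforstab}, run with $v\equiv c$ so that the one inequality step there (replacing $v$ by its upper bound and invoking positive semidefiniteness of the quadratic-variation term) is not needed and equality survives. Note that the paper does not actually write this computation out for Lemma \ref{lemmatrixeq}: its proof is a one-line citation of \cite[Lemma 2.1]{redmannspa2}. Your self-contained It\^o derivation is therefore a legitimate, arguably more informative, substitute.

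One technical point in your justification is off. You assert that $\mathbb E[M(t)]=0$ "reduces to the a priori bound $\sup_{t\le T}\mathbb E\|x_c(t)\|_2^2<\infty$". The integrand of $M$ is $\sqrt{c}\,\bigl(N_i x_c x_c^T + x_c x_c^T N_i^T\bigr)$, which is \emph{quadratic} in $x_c$; for the stochastic integral to be a mean-zero (square-integrable) martingale one needs $\mathbb E\int_0^T\|x_c(s)\|_2^4\,ds<\infty$, i.e., a fourth-moment bound, which does not follow from the second-moment bound you invoke. (Also, Appendix \ref{secprooflem2} does not in fact establish the moment estimate you attribute to it; it silently assumes the It\^o integrals are true martingales, exactly as you do.) The repair is routine --- apply the same Gronwall argument to $t\mapsto\mathbb E\|x_c(t)\|_2^4$, or cite the standard fact that linear SDEs with constant coefficients and a bounded driving covariance have finite moments of every order --- but as written the key step "$\mathbb E[M(t)]=0$" is not justified by the bound you name.
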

\begin{proof}
The statement of this lemma is a special case of \cite[Lemma 2.1]{redmannspa2} 
\end{proof}\\
We now formulate a Gronwall type lemma for matrix differential inequalities involving resolvent positive operators. We refer to Appendix \ref{appendixbla} for a definition of these operators.
\begin{lem}\label{lem3}
Suppose that $L$ is a resolvent positive operator on the space of symmetric matrices. Let the matrix-valued function $X(t)\geq 0$, $t\in [0, T]$, satisfy
\begin{align}\label{matrix_ineq} 
 \dot X(t)\leq L(X(t))
\end{align}
and let $Z(t)\geq 0$, $t\in [0, T]$, be the solution to the matrix differential equation \begin{align}\label{matrix_eq}
 \dot Z(t) = L(Z(t)).
\end{align}
If $X(0)\leq Z(0)$, we have that $X(t)\leq Z(t)$ for all $t\in [0, T]$.
\end{lem}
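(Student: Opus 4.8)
The plan is to turn the differential inequality into a Duhamel (variation-of-constants) identity and then use that the semigroup generated by $L$ preserves positive semidefiniteness. Introduce the \emph{defect} $H(t):=L(X(t))-\dot X(t)$, so that the hypothesis $\dot X(t)\le L(X(t))$ is precisely the statement $H(t)\ge 0$, and $X$ solves the linear inhomogeneous equation $\dot X(t)=L(X(t))-H(t)$ with initial value $X(0)$. Since $X$ is absolutely continuous on $[0,T]$ (so that $\dot X$ exists a.e.) and $L$ is a bounded linear operator on the finite-dimensional space of symmetric matrices, the map $t\mapsto L(X(t))$ is continuous and hence $H$ is integrable on $[0,T]$; this regularity is what legitimizes the steps below.

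Next I would record the two variation-of-constants representations
\[
  X(t) = e^{tL}(X(0)) - \int_0^t e^{(t-s)L}(H(s))\,ds, \qquad Z(t) = e^{tL}(Z(0)),
\]
where $(e^{tL})_{t\ge 0}$ denotes the (norm-continuous) semigroup with generator $L$; the first identity is checked by differentiating $t\mapsto e^{-tL}(X(t))$, the second is merely the definition of the solution of \gref{matrix_eq}. Subtracting them and using the linearity of $e^{tL}$,
\[
  Z(t) - X(t) = e^{tL}(Z(0) - X(0)) + \int_0^t e^{(t-s)L}(H(s))\,ds,
\]
so it suffices to see that both terms on the right are positive semidefinite. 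Here I would invoke the property of resolvent positive operators recalled in Appendix~\ref{appendixbla} (equivalently, Arendt's characterization): $L$ is resolvent positive on the space of symmetric matrices if and only if the generated semigroup $(e^{tL})_{t\ge 0}$ is positive, i.e.\ $e^{tL}$ maps the cone of positive semidefinite matrices into itself for every $t\ge 0$. Since $X(0)\le Z(0)$ gives $Z(0)-X(0)\ge 0$, and $H(s)\ge 0$ for almost every $s$, positivity of the semigroup yields $e^{tL}(Z(0)-X(0))\ge 0$ and $e^{(t-s)L}(H(s))\ge 0$; the integral of a positive-semidefinite-matrix-valued function is again positive semidefinite, whence $Z(t)-X(t)\ge 0$ for all $t\in[0,T]$, which is the assertion. (The standing hypotheses $X(t),Z(t)\ge 0$ are consistent with, but not actually needed in, this argument.)

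The step I expect to be the main obstacle is the passage from the differential inequality to the Duhamel identity: one must make sure the defect $H$ is well defined and integrable — which rests on the regularity of $X$ — and then justify the variation-of-constants formula for an $L^1$ forcing term. The other essential input is the equivalence ``resolvent positive $\Longleftrightarrow$ positive semigroup'', which is classical for finite-dimensional ordered spaces; granting these two facts, the remainder is routine bookkeeping with the positive semidefinite cone.
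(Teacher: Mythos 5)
Your argument is correct, but it follows a genuinely different route from the one in the paper. You linearize globally: you encode the inequality in the nonnegative defect $H(t)=L(X(t))-\dot X(t)$, write both $X$ and $Z$ via Duhamel's formula, and reduce everything to the positivity of the semigroup $\operatorname{e}^{tL}$ on the cone of positive semidefinite matrices. That last input is indeed classical and, in the direction you need, essentially immediate from the definition in Appendix \ref{appendixbla}: $\operatorname{e}^{tL}=\lim_{n\to\infty}\bigl(I-\tfrac{t}{n}L\bigr)^{-n}$, and for $n/t>\alpha_0$ each factor $\bigl(\tfrac{n}{t}I-L\bigr)^{-1}$ is positive, so the limit preserves the (closed) cone; hence $\operatorname{e}^{tL}(Z(0)-X(0))\geq 0$ and $\int_0^t \operatorname{e}^{(t-s)L}(H(s))\,ds\geq 0$, giving the claim. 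The paper instead argues infinitesimally: it sets $Y=Z-X$, perturbs the equation for $Y$ by $\epsilon I$, and runs a first-touching-time contradiction, where the key step is the cross-positivity characterization of resolvent positive operators (Theorem \ref{equiresolpos}): if $u_0^T Y_\epsilon(t_0)u_0=0$ at the first touching time with $Y_\epsilon(t_0)\geq 0$, then $u_0^T L(Y_\epsilon(t_0))u_0\geq 0$, forcing $u_0^T\dot Y_\epsilon(t_0)u_0>0$. Your version is shorter and more transparent once the semigroup positivity is granted, and it makes explicit that the standing hypotheses $X(t),Z(t)\geq 0$ are not needed; the paper's version stays entirely at the level of the quasimonotonicity condition stated in its appendix and avoids invoking the exponential formula. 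Both proofs rest on the same implicit regularity of $X$ (absolute continuity so that $\dot X$, and hence the defect, is defined and integrable), which you are right to flag.
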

\begin{proof}
The proof of this theorem for a special resolvent positive operator is given in \cite[Lemma 3.3]{redmann_bil_h2}. 
In order to render this paper as self-contained as possible, the proof is stated in Appendix \ref{secprooflem3} using the same arguments.
\end{proof}\\
Lemma \ref{lem3} together with Lemmas \ref{lemforstab} and \ref{lemmatrixeq} implies that \begin{align}\label{estimateforcov}
   \mathbb{E} \left[x(t; x_0)x^T(t; x_0)\right]\leq \mathbb{E} \left[x_c(t; x_0)x_c^T(t; x_0)\right],                                                                                        
                                                                                          \end{align}
since $L(X):= AX+XA^T+c \sum_{i, j=1}^q N_i X N_j^T k_{ij}$ defines a resolvent positive operator on the space of symmetric matrices, see Appendix \ref{appendixbla}.
This means that the covariance function of a suitable Black Scholes model dominates the one of a Heston model in case the volatility function is bounded.
\begin{remark}
We can use the same approach if we allow for a different volatility $v_i$ ($i=1, \ldots, q$) in every summand of the diffusion in (\ref{stochstatenew}). Then, boundedness has to be understood in a more general sense, i.e., 
we need the existence of a positive semidefinite matrix $\mathbf C =(c_{ij})_{i, j=1, \ldots, q}$ such that \begin{align*}
 \left(v^{\frac{1}{2}}_1(t), \ldots, v^{\frac{1}{2}}_q(t)\right)^T \left(v^{\frac{1}{2}}_1(t), \ldots, v^{\frac{1}{2}}_q(t)\right)\leq \mathbf C                                                                                     
                                                                                      \end{align*}
for all $t\in [0, T]$. The operator $L$ in Lemma \ref{lemforstab} then becomes $L(X)= AX+XA^T+\sum_{i, j=1}^q N_i X N_j^T c_{ij} k_{ij}$. The associated Black Scholes model that guarantees the identity as 
in Lemma \ref{lemmatrixeq}, is given by setting $v_i\equiv 1$ and replacing the Wiener process with covariance matrix $\mathbf K$ by a Wiener process with covariance $\mathbf K\circ \mathbf C$, where $\cdot\circ \cdot$ denotes the 
component-wise product of two matrices. Notice that $\mathbf K\circ \mathbf C$ is positive semidefinite again due to Schur's product theorem \cite{schurprod}.
                                                                                      \end{remark}

\section{Characterization of dominant states}
\label{sec:reach}

We are interested in the dominant subspace of system (\ref{original_system}) meaning that we aim to identify states that are less important in both equations
(\ref{stochstatenew}) and (\ref{output}). Those can be neglected in the system dynamics, leading us to an approximation of the system in a lower dimension.\smallskip

The objects that we choose to identify unimportant states are related to matrices that are used in deterministic control theory. In linear deterministic control systems, the so-called reachability Gramian characterizes 
the minimal energy that is needed to steer a system from zero to some desired state at time $T$. Moreover, the observability Gramian determines the energy that is caused by the observations of an unknown initial state 
on the time interval $[0, T]$, see, e.g., \cite{morAnt05}. Consequently, these Gramians can be used to identify states that require a large amount of energy to be reached and states that produce only very little observation 
energy. Those are less relevant in a control system.\smallskip

We use these ideas and extend them to the stochastic uncontrolled framework considered here. The matrices identifying the dominant subspaces of system (\ref{original_system}) will also be called Gramians due to the link 
between the concepts.

\subsection{Dominant subspaces of (\ref{stochstatenew})}

We introduce the fundamental solution to (\ref{stochstatenew}) as an $\mathbb R^{n\times n}$-valued stochastic process $\Phi$ solving
\begin{align}\label{funddef}
 \Phi(t)=I+\int_0^t A \Phi(s) ds+\sum_{i=1}^q \int_0^t \sqrt{v(s)} N_i \Phi(s)dW_i(s), \quad t\in [0, T],
\end{align}
where $I$ denotes the identity matrix. If $v\equiv c$, the fundamental solution is denoted by $\Phi_c$. 
It is not hard to see that the solution to (\ref{stochstatenew}) is given by \begin{align}\label{solreprexplicit}
    x(t; x_0)= \Phi(t)x_0 = \Phi(t) Bz,                                 
\end{align}
because we assumed that the initial state is spanned by the columns of $B$. Below, $\langle \cdot, \cdot \rangle_2$ denotes the Euclidean inner product and $\left\|\cdot \right\|_2^2$ is the corresponding norm.

Based on (\ref{solreprexplicit}), let us now identify the states in (\ref{stochstatenew}) that play a minor role. We obtain \begin{align}\nonumber
\mathbb E \left\vert\langle x(t; x_0), \tilde x \rangle_2\right\vert^2 &=\mathbb E 
\left\vert \langle \Phi(t) B z, \tilde x\rangle_2 \right\vert^2= \mathbb E \left\vert\langle z, B^T\Phi^T(t) \tilde x \rangle_2 \right\vert^2\\ \label{firstestl2}
&\leq 
\tilde x^T \mathbb E \left[\Phi(t) B B^T \Phi^T(t)\right]\tilde x \left\|z \right\|_2^2 
\end{align}
for a given vector $\tilde x\in \mathbb R^n$ and using Cauchy's inequality. Since $\mathbb E \left[\Phi(t) B B^T \Phi^T(t)\right]$ might not be available
from the computational point of view, we find an estimate based on $\Phi_c$ in the following proposition.
\begin{prop}\label{propestfundsol}
Let $\Phi$ be the fundamental solution to (\ref{stochstatenew}) and suppose that $\Phi_c$ is the fundamental solution to (\ref{stochstatenew}) for the special case $v\equiv c$. Then, we have
\begin{align*}
 \mathbb E \left[\Phi(t)B B^T \Phi^T(t)\right]\leq \mathbb E \left[\Phi_c(t)B B^T \Phi_c^T(t)\right].
\end{align*}
\end{prop}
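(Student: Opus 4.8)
The plan is to reduce the matrix-valued comparison to the vector-valued second-moment estimates already encoded in Lemmas \ref{lemforstab} and \ref{lemmatrixeq}, and then to invoke the comparison principle of Lemma \ref{lem3}. Write $B=\left[b_1\mid\cdots\mid b_m\right]$ in terms of its columns $b_j\in\R^n$. By the solution representation (\ref{solreprexplicit}) we have $\Phi(t)b_j=x(t;b_j)$, hence
\[
 \Phi(t)BB^T\Phi^T(t)=\sum_{j=1}^m x(t;b_j)\,x^T(t;b_j),
\]
and likewise $\Phi_c(t)BB^T\Phi_c^T(t)=\sum_{j=1}^m x_c(t;b_j)\,x_c^T(t;b_j)$. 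Setting $P(t):=\mathbb E\!\left[\Phi(t)BB^T\Phi^T(t)\right]$ and $P_c(t):=\mathbb E\!\left[\Phi_c(t)BB^T\Phi_c^T(t)\right]$, linearity of the expectation gives $P(t)=\sum_{j}\mathbb E\!\left[x(t;b_j)x^T(t;b_j)\right]$ and $P_c(t)=\sum_{j}\mathbb E\!\left[x_c(t;b_j)x_c^T(t;b_j)\right]$. Both functions are positive semidefinite for every $t$, being sums of expectations of rank-one (Gram) matrices.

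Next I would apply Lemma \ref{lemforstab} to each summand of $P$: with $L$ the (linear) operator $L(X)=AX+XA^T+c\sum_{i,\ell=1}^q N_i X N_\ell^T k_{i\ell}$ introduced just after (\ref{estimateforcov}), every term $\mathbb E\!\left[x(t;b_j)x^T(t;b_j)\right]$ satisfies $\dot X_j\le L(X_j)$ with $X_j(0)=b_jb_j^T$. Since $L$ is linear and the cone of positive semidefinite matrices is closed under addition, summing over $j=1,\dots,m$ yields $\dot P\le L(P)$ with $P(0)=\sum_j b_jb_j^T=BB^T$. In the same way, Lemma \ref{lemmatrixeq} applied to each summand of $P_c$ and summed gives the exact matrix differential equation $\dot P_c=L(P_c)$ with $P_c(0)=BB^T$.

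Finally, since $L$ is a resolvent positive operator on the space of symmetric matrices — as recorded in the discussion following (\ref{estimateforcov}) — and $P(0)=P_c(0)=BB^T$, Lemma \ref{lem3} applied with $X=P$ and $Z=P_c$ gives $P(t)\le P_c(t)$ for all $t\in[0,T]$, which is precisely the assertion. I do not expect a substantive obstacle here: the only points needing a word of care are that $P$ and $P_c$ are genuinely differentiable, so that the differential (in)equalities are meaningful, and that differentiation commutes with the finite sum over the columns of $B$ — the former is part of the content of the cited Lemmas \ref{lemforstab} and \ref{lemmatrixeq}, and the latter is immediate. The conceptual heart of the argument is simply the observation that $\Phi(t)BB^T\Phi^T(t)$ decomposes into a sum of the outer products $x(t;b_j)x^T(t;b_j)$, which lets the scalar-level bounds transfer verbatim.
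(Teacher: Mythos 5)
Your proposal is correct and takes essentially the same route as the paper: the identical column decomposition $\Phi(t)BB^T\Phi^T(t)=\sum_{j}x(t;b_j)x^T(t;b_j)$ combined with the comparison machinery of Lemmas \ref{lemforstab}, \ref{lemmatrixeq} and \ref{lem3}. The only cosmetic difference is that the paper applies the already-packaged termwise bound (\ref{estimateforcov}) to each summand and then sums, whereas you sum the differential (in)equalities first and invoke Lemma \ref{lem3} once at the level of the sums $P$ and $P_c$.
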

\begin{proof}
We denote the $i$th column of the matrix $B$ by $b_i$, allowing us to write $\Phi(t)B=\mat{ccc}x(t; b_1),\ldots,x(t; b_m)\rix$. 
Hence, we have\begin{align}\label{repsofF}
\mathbb E \left[\Phi(t)B B^T \Phi^T(t)\right]=\sum_{k=1}^m \mathbb E \left[x(t; b_k)x^T(t; b_k)\right].
\end{align}
Applying (\ref{estimateforcov}) to (\ref{repsofF}) yields \begin{align*}
\mathbb E \left[\Phi(t)B B^T \Phi^T(t)\right]\leq\sum_{k=1}^m \mathbb E \left[x_c(t; b_k)x_c^T(t; b_k)\right] = \mathbb E \left[\Phi_c(t)B B^T \Phi_c^T(t)\right].
\end{align*} 
This concludes the proof.
\end{proof}\\
Combining (\ref{firstestl2}) with Proposition \ref{propestfundsol}, we find \begin{align}\label{interpgram1}
      \mathbb E \left\vert\langle x(t; x_0), \tilde x \rangle_2\right\vert^2\leq \tilde x^T F(t)\tilde x \left\|z \right\|_2^2,
                        \end{align}
where $F(t):=\mathbb E \left[\Phi_c(t) B B^T \Phi_c^T(t)\right]$. We define $P_T:= \int_0^T F(t) dt$ and call $P_T$ (time-limited) reachability Gramian.
Integrating both sides of (\ref{interpgram1}) over $[0, T]$ yields \begin{align}\label{interpgram2}
      \int_0^T\mathbb E \left\vert\langle x(t; x_0), \tilde x \rangle_2\right\vert^2 dt\leq \tilde x^T P_T \tilde x \left\|z \right\|_2^2.
                        \end{align}
Consequently, the Gramian $P_T$ characterizes the relevant subspaces as we see in the next proposition.
\begin{prop}
Let $x(\cdot; x_0)$ be the solution to (\ref{stochstatenew}) with initial state $x_0 =Bz$, i.e., it is spanned by the columns of $B$. Then, it holds that
\begin{align*}
                    x(t; x_0)\in \im(P_T) \quad \mathbb P\otimes dt\text{-a.s. on } \Omega\times[0, T],
                                                                       \end{align*} 
where $\im(\cdot)$ denotes the image of a matrix.
\end{prop}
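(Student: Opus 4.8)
The plan is to exploit the estimate \gref{interpgram2} together with the fact that, for a symmetric positive semidefinite matrix, the image is the orthogonal complement of the kernel. First I would observe that $P_T=\int_0^T F(t)\,dt$ is symmetric positive semidefinite, being an integral of the matrices $F(t)=\mathbb E[\Phi_c(t)BB^T\Phi_c^T(t)]\geq 0$. Hence $\R^n=\im(P_T)\oplus\kernel(P_T)$ orthogonally, and it suffices to show that $x(t;x_0)$ is $\mathbb P\otimes dt$-a.s.\ orthogonal to every vector in $\kernel(P_T)$.

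Next I would fix an orthonormal basis $\tilde x_1,\dots,\tilde x_r$ of $\kernel(P_T)$. For each $j$ we have $\tilde x_j^T P_T \tilde x_j = 0$, since $P_T\tilde x_j=0$. Plugging $\tilde x=\tilde x_j$ into \gref{interpgram2} gives
\begin{align*}
0\leq \int_0^T \mathbb E\left\vert\langle x(t;x_0),\tilde x_j\rangle_2\right\vert^2\,dt \leq \tilde x_j^T P_T\tilde x_j\,\|z\|_2^2 = 0,
\end{align*}
so the nonnegative integrand vanishes $\mathbb P\otimes dt$-a.s., i.e.\ there is a measurable set $\mathcal N_j\subseteq\Omega\times[0,T]$ of full $\mathbb P\otimes dt$-measure on which $\langle x(t;x_0),\tilde x_j\rangle_2=0$.

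Finally I would take the intersection $\mathcal N=\bigcap_{j=1}^r \mathcal N_j$, which still has full $\mathbb P\otimes dt$-measure because $r<\infty$. On $\mathcal N$ the vector $x(t;x_0)$ is orthogonal to each $\tilde x_j$, hence to all of $\kernel(P_T)=\spaned\{\tilde x_1,\dots,\tilde x_r\}$, and therefore lies in $\im(P_T)=\kernel(P_T)^\perp$. This gives $x(t;x_0)\in\im(P_T)$ $\mathbb P\otimes dt$-a.s.\ on $\Omega\times[0,T]$, as claimed. The only points requiring care are the linear-algebra facts that $\tilde x^T P_T\tilde x=0$ forces $P_T\tilde x=0$ for positive semidefinite $P_T$ (via its symmetric square root) and that $\im=\kernel^\perp$ for symmetric matrices; the measure-theoretic bookkeeping (finite intersection of full-measure sets, joint measurability of $(\omega,t)\mapsto x(t;x_0)(\omega)$) is routine. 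No genuinely hard step is expected.
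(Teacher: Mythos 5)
Your proposal is correct and follows essentially the same route as the paper: plug kernel vectors of $P_T$ into \gref{interpgram2}, deduce that the inner products vanish $\mathbb P\otimes dt$-a.s., and conclude via $\im(P_T)=\kernel(P_T)^\perp$ for the symmetric positive semidefinite matrix $P_T$. The extra bookkeeping you supply (orthonormal basis of the kernel, finite intersection of full-measure sets) is a harmless elaboration of the paper's one-line argument.
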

\begin{proof}
If $\tilde x\in\ker(P_T)$, i.e., $\tilde x$ lies in the kernel of $P_T$, then the left-side of (\ref{interpgram2}) is zero, which implies that $\langle x(t;x_0), \tilde x \rangle_2=0$ $\mathbb P\otimes dt$-a.s.
Since $P_T$ is symmetric positive semidefinite, this yields the claim.
\end{proof}\\
Thus, the states that are not in $\im(P_T)$ are not important in equation (\ref{stochstatenew}). However, it is also important to identify the states that play a minor role. Therefore, we 
turn our attention to states that lie in $\im(P_T)$ but that are nevertheless less important. We can choose an orthonormal basis of eigenvectors $(p_{k})_{k=1,\ldots, n}$ of $P_T$ with associated eigenvalues
 $(\lambda_{k})_{k=1,\ldots, n}$. Then, the following representation \begin{align*}
 x(t; x_0)=\sum_{k=1}^n  \left\langle x(t; x_0), p_{k} \right\rangle_2 p_{k}  \end{align*}
holds. Setting $\tilde x= p_k$ in (\ref{interpgram2}) leads to \begin{align}\label{interpgram21}
      \int_0^T\mathbb E \left\vert\langle x(t; x_0), p_k \rangle_2\right\vert^2 dt\leq \lambda_k \left\|z \right\|_2^2.
                        \end{align}
Consequently, $x(t; x_0)$ is small in the direction of $p_{k}$ if $\lambda_{k}$ is small. Hence, 
states with a large component in the direction of such a $p_{k}$ are less relevant. This means that
 that eigenspaces of $P_T$ corresponding to small eigenvalues $\lambda_k$ play a minor role in the system dynamics. 
\begin{remark}\label{reamrkstable}
$P_T$ is related to the Gramian used in \cite{inhomInitial}. However, they choose $\lim_{T\rightarrow \infty} P_T$ in some asymptotically stable deterministic setting , i.e., 
$N_i=0$ and $\lambda(A)\subset \mathbb C_-$,  where $\lambda(\cdot)$ denotes the spectrum of a matrix. In the stochastic case the respective stability condition were
$\mathbb E\left\|x_c(t;x_0)\right\|_2^2\rightarrow 0$ for $t\rightarrow \infty$ and all initial conditions $x_0$ (mean square asymptotic stability), see, e.g., \cite{damm, staboriginal, redmannspa2}. 
Stability is not assumed in this paper such that $\lim_{T\rightarrow \infty} P_T$ does not exist in general. Moreover, the motivation to use the reachability Gramian $P_T$ is different from the motivation given in \cite{inhomInitial}.
\end{remark}\\                      
We conclude this section by a discussion on how to compute $P_T$ which allows to identify redundant information in the system. Using the representation of $F$ in (\ref{repsofF}) and applying 
Lemma \ref{lemmatrixeq} to every summand of its right-side, we see that $F$ satisfies 
\begin{align}\label{matrixequalforF}
{\dot F}(t) = AF(t)+F(t)A^T+c \sum_{i, j=1}^q N_i F(t) N_j^T k_{ij},\quad F(0)=BB^T.
    \end{align}
Integrating both sides of (\ref{matrixequalforF}) yields \begin{align}\label{comPT}
F(T)-BB^T = AP_T+P_T A^T+c \sum_{i, j=1}^q N_i P_T N_j^T k_{ij}.
    \end{align}
This means that the large-scale generalized Lyapunov equation (\ref{comPT}) needs to be solved to derive $P_T$. This can be done also in a large-scale setting for a given left-side.
However, the left-side of (\ref{comPT}) depends on $F(T)$, a matrix that 
needs to be computed beforehand. For dimensions $n$ of a few hundreds, this can be done directly by vectorizing (\ref{matrixequalforF}). Defining $f(t):=\vect(F(t))$, we then obtain 
\begin{align}\label{matrixequalforFvec}
\dot{f}(t) = \mathcal K f(t),\quad f(0)=\vect(BB^T),
    \end{align}
where $\vect(\cdot)$ is the vectorization of a matrix, $\cdot \otimes \cdot$ denotes the Kronecker product of two matrices and \begin{align*}
                                                                                                       \mathcal K:= I\otimes A + A\otimes I+ c \sum_{i, j=1}^q N_i \otimes N_j k_{ij}.
                                                                                                                               \end{align*}
Consequently, deriving $F(T)$ relies on the efficient computation of a matrix exponential, since
\begin{align}\label{comptFT}
 f(T) = \expn^{\mathcal K T}\vect(BB^T).
\end{align}
A discussion on how to determine a matrix exponential efficiently can be found in \cite{morK17} and references therein.
However, we need to assume that $0\not\in \lambda (\mathcal K)$. This guarantees a unique solution of (\ref{comPT}) which we suppose to have 
below.  If system \eqref{original_system} were mean square asymptotically stable in the spirit of Remark \ref{reamrkstable}, then we could take $T\rightarrow \infty$ in (\ref{comPT}) and $F(T)$ would disappear in the limit 
which makes the computation of the (infinite) reachability Gramian much simpler. Such type of (infinite) reachability Gramians are, e.g., 
used to characterize reachability energies in mean square asymptotically stable controlled stochastic systems having time-invariant coefficients \cite{redmannbenner, redmannspa2}, a setting that differs significantly 
from the one considered here.\smallskip

More advanced approaches need to be used to solve for $F(T)$ if $n$ is very large. There are relevant examples in which $F(T)$ and hence $P_T$ can be derived explicitly 
as we will see in Section \ref{numerics}.  

We give some insight on the computational complexity of the solution to the Lyapunov equation in the following remark.

\begin{remark}
Once we have determined $F(T)$, a large scale generalized Lyapunov equation (\ref{comPT}) needs to be solved. If one uses vectorization and the Kronecker product to solve the matrix equation, which then becomes
a linear system of size $n^2$, then in general $\mathcal{O}(n^6)$ operations are needed to solve this linear system.\smallskip

However, in the past decades, several techniques have been developed in order to solve the large scale generalized Lyapunov equation (\ref{comPT}) more efficiently.
Damm \cite{damm_lyap_eq} has shown that one can compute the solution to (\ref{comPT}) by solving a sequence of standard Lyapunov equations, that is the case where $N_i=0$ for $i=1, \ldots, q$. Such standard Lyapunov equations can either
be solved by direct methods, such as Bartels-Stewart \cite{Bartels_Stewart}, which cost $\mathcal{O}(n^3)$ operations, or by iterative methods such as ADI or Krylov subspace methods
\cite{Simoncini_ueberblick}, which have a much  smaller complexity than the Bartels-Stewart algorithm, in particular, when the left hand side is of low rank or structured. 
In addition, several low rank solvers have been developed for computing the solution to (\ref{comPT}) directly, for the case where one can show that the solution is approximately
of low rank \cite{benner2013low,kressner2015truncated,shank2016efficient}. The complexity of those methods is generally $\mathcal{O}(n^2)$ or less.
\end{remark}

\subsection{Dominant subspace of (\ref{output})}\label{diffobs}

 We now characterize the importance of an initial state $x_0$ in the output equation. The initial condition is not relevant if the corresponding output $y(\cdot; x_0)$ has zero energy and is of low relevance if the 
 output energy is small, since those initial states barely contribute to the quantity of interest. We begin with an estimate for $y$ based on the result of Section \ref{stochstabgen}. 
 \begin{prop}
 Suppose that $y$ is given by (\ref{output}) and let us assume that $y_c$ is the output associated with the solution to the solution of (\ref{stochstatenew}) if $v\equiv c$, i.e., $y_c(t)=C x_c(t)$. Then, we have 
 \begin{align}\label{boundout}
\mathbb E\int_0^T \left\|y(t) \right\|_2^2dt \leq \mathbb E\int_0^T \left\|y_c(t) \right\|_2^2dt.
 \end{align}
 \end{prop}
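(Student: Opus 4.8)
The plan is to reduce the statement to the covariance bound \gref{estimateforcov} already established via Lemmas \ref{lemforstab}, \ref{lemmatrixeq}, and \ref{lem3}. First I would rewrite the output energy in terms of the state covariance: using $y(t)=Cx(t;x_0)$ and the cyclic property of the trace,
\begin{align*}
\mathbb E\int_0^T \left\|y(t)\right\|_2^2 dt = \mathbb E\int_0^T \trace\!\left(C x(t;x_0)x^T(t;x_0) C^T\right) dt = \int_0^T \trace\!\left(C\, \mathbb E\!\left[x(t;x_0)x^T(t;x_0)\right] C^T\right) dt,
\end{align*}
where the interchange of expectation and trace (a finite linear combination) is immediate, and Fubini is justified by the integrability of the covariance function on the bounded interval $[0,T]$. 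The same identity holds for $y_c$ with $\mathbb E\left[x_c(t;x_0)x_c^T(t;x_0)\right]$ in place of the Heston covariance.

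Next I would invoke \gref{estimateforcov}, i.e.\ $\mathbb E\left[x(t;x_0)x^T(t;x_0)\right]\leq \mathbb E\left[x_c(t;x_0)x_c^T(t;x_0)\right]$ in the positive-semidefinite order, for every $t\in[0,T]$. Conjugating a Loewner inequality $M_1\leq M_2$ by any matrix $C$ preserves it, so $C\,\mathbb E\left[x x^T\right]C^T \leq C\,\mathbb E\left[x_c x_c^T\right]C^T$; taking the trace of a positive-semidefinite matrix is nonnegative, hence $\trace\!\left(C\,\mathbb E[xx^T]C^T\right)\leq \trace\!\left(C\,\mathbb E[x_c x_c^T]C^T\right)$ pointwise in $t$. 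Integrating this scalar inequality over $[0,T]$ and using the two identities from the previous step yields \gref{boundout}.

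There is essentially no serious obstacle here: the only points requiring a word of care are (i) that \gref{estimateforcov} was derived under the standing assumption $v\le c$ $\mathbb P$-a.s., which is exactly the hypothesis in force, and (ii) the measurability/integrability needed to push the expectation through the time integral, which follows from boundedness of $v$ and continuity of $t\mapsto\mathbb E[x(t;x_0)x^T(t;x_0)]$ on the compact interval. So the proof is a three-line chain: express the energy as an integrated trace of the covariance, apply the Loewner bound \gref{estimateforcov} conjugated by $C$, and take traces and integrate.
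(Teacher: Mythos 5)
Your proof is correct and follows essentially the same route as the paper's: rewrite the output energy as $\int_0^T \trace\!\left(C\,\mathbb E[x(t)x^T(t)]\,C^T\right)dt$, apply the covariance bound \gref{estimateforcov}, and use that the Loewner inequality is preserved under conjugation by $C$ and taking the trace. Your version merely spells out the justifications (Fubini, positivity of the trace of a positive-semidefinite matrix) that the paper leaves implicit.
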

\begin{proof}
We use the linearity of the trace to obtain
  \begin{align*}
\mathbb E\int_0^T \left\|y(t) \right\|_2^2dt = \mathbb E\int_0^T \trace(Cx(t)x^T(t)C^T ) dt =\int_0^T \trace(C\mathbb E[x(t)x^T(t)]C^T ) dt.
 \end{align*}
Using that (\ref{estimateforcov}) is preserved when the trace is applied yields 
 \begin{align*}
\mathbb E\int_0^T \left\|y(t) \right\|_2^2dt \leq \int_0^T \trace(C\mathbb E[x_c(t)x_c^T(t)]C^T ) dt = \mathbb E\int_0^T \left\|y_c(t) \right\|_2^2dt
 \end{align*}
 which concludes the proof of this proposition.
 \end{proof}\\
 Now, the goal is to find a bound for the energy of $y_c$. Therefore, we introduce $Q_T$ as the solution to
\begin{align}\label{gengenlyapobs}
G(T)-C^T C = A^T Q_T+Q_T A+c\sum_{i, j=1}^q N_i^T Q_T N_j k_{ij}, 
\end{align}
an equation that can be solved for large $n$ once the left-side is given. We refer to $Q_T$ as the observability Gramian since it characterizes the observation 
energy as we will see below. $G(t)$, $t\in [0, T]$, entering in (\ref{gengenlyapobs}) satisfies \begin{align}\label{ODEforG}
\dot G(t)= A^T G(t)+G(t) A+c\sum_{i, j=1}^q N_i^T G(t) N_j k_{ij}, \quad G(0)=C^T C,
\end{align}
i.e., $Q_T= \int_0^T G(s) ds$. Notice that if $n$ is not too large, $G(T)$ can be computed analogously to (\ref{comptFT}) meaning that $g(T):=\vect(G(T))$ is given by
\begin{align*}
 g(T) = \expn^{\mathcal K^T T}\vect(C^TC).
\end{align*}
Below, we distinguish between two cases. We first discuss the case in which the system matrices commute.
\paragraph{Commuting matrices} We find a representation for $Q_T$ and subsequently an energy estimate for $y_c$ in case all the matrices in (\ref{stochstatenew}) commute. For that purpose, we establish the following
result. \begin{prop}\label{propcommute}
Let us assume that all matrices $A, N_1, \ldots, N_q$ commute. Hence, we have that these matrices commute with the fundamental solution $\Phi_c$, i.e.,
\begin{align}\label{matric_commute}
A \Phi_c(t) = \Phi_c(t) A\quad \text{and}\quad N_i \Phi_c(t) = \Phi_c(t) N_i
\end{align}
for all $t\in[0, T]$ and $i=1, \ldots, q$.
        \end{prop}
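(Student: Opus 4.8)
The plan is to show first that the fundamental solution $\Phi_c$ can be written as a (matrix) power series in $t$ whose coefficients are polynomials in $A$ and the $N_i$, and then to invoke the fact that any matrix commutes with polynomials in itself (and with polynomials in matrices it already commutes with). Since this is an SDE rather than an ODE, the cleanest route is via Picard iteration: define $\Phi_c^{(0)}(t) = I$ and
\begin{align*}
\Phi_c^{(\ell+1)}(t) = I + \int_0^t A\,\Phi_c^{(\ell)}(s)\,ds + \sqrt{c}\sum_{i=1}^q \int_0^t N_i\,\Phi_c^{(\ell)}(s)\,dW_i(s).
\end{align*}
One checks by induction that each $\Phi_c^{(\ell)}(t)$ lies in the (unital) algebra generated by $A, N_1, \ldots, N_q$: the base case is trivial, and the inductive step only multiplies the previous iterate on the left by $A$ or by some $N_i$ and integrates, operations that stay inside that algebra. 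The standard fixed-point argument for linear SDEs with bounded coefficients gives $\Phi_c^{(\ell)}(t) \to \Phi_c(t)$ in $L^2(\Omega)$, uniformly on $[0,T]$; hence $\Phi_c(t)$ lies in the closure of that algebra.

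Next I would use the commutativity hypothesis. Since $A$ and all the $N_i$ pairwise commute, the algebra they generate is a commutative algebra of matrices; therefore every element of it commutes with $A$ and with each $N_j$. Passing to the $L^2$-limit preserves this: if $\Phi_c^{(\ell)}(t)A = A\Phi_c^{(\ell)}(t)$ for every $\ell$ and $\Phi_c^{(\ell)}(t) \to \Phi_c(t)$ in $L^2(\Omega)$, then $\Phi_c(t)A = A\Phi_c(t)$ almost surely, and likewise $\Phi_c(t)N_i = N_i\Phi_c(t)$, which is exactly \eqref{matric_commute}. (Alternatively, one can avoid limits entirely: let $D$ be the linear map $M \mapsto MA - AM$ on $\mathbb{R}^{n\times n}$; applying $D$ to the defining integral equation \eqref{funddef} with $v\equiv c$ shows that $D(\Phi_c(\cdot))$ satisfies the same linear SDE \eqref{funddef} but with zero initial condition, and zero solves that equation, so by uniqueness $D(\Phi_c(t)) = 0$. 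The same works with $N_i$ in place of $A$. This is the slicker argument and is probably what the authors intend, so I would present it as the main line and mention Picard iteration only as motivation.)

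The main obstacle, such as it is, is purely bookkeeping: one must be careful that the operation ``multiply on the left by $A$'' — not by an arbitrary matrix — is what appears in \eqref{funddef}, so that the iterates genuinely stay in the commutative algebra, and one must quote the right existence/uniqueness theorem for linear matrix-valued SDEs with the bounded deterministic coefficient $\sqrt{c}$ to justify both the convergence of the Picard scheme and the uniqueness step in the alternative argument. There is no analytic subtlety beyond that; the content is the elementary linear-algebra fact that commuting matrices generate a commutative algebra, combined with the representation of $\Phi_c$ through that algebra. I would therefore keep the write-up short: state the $D$-map argument, invoke uniqueness for \eqref{funddef} with $v\equiv c$, and conclude.
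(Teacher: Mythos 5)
Your main line of argument --- applying the commutator map $D(M)=MA-AM$ to the defining integral equation, observing that $D(\Phi_c(\cdot))$ solves the same linear SDE with zero initial condition, and concluding by uniqueness --- is correct and is essentially the paper's own proof, which likewise notes that $A\Phi_c$ and $\Phi_c A$ satisfy the same equation once commutativity is used. The Picard-iteration discussion is a valid alternative but adds nothing needed here.
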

\begin{proof}
These identities hold since the left and the right-sides satisfy the same differential equation, e.g., one can multiply (\ref{funddef}) with $A$ from the left to obtain the 
equation for $A \Phi_c$ and with $A$ from the right to get the one for $\Phi_c A$. Since all system matrices commute, the equations coincide. Similarly, one finds the indents for the matrices $N_i$.
\end{proof}\\
The example considered in Section \ref{numerics} satisfies the assumption of Proposition \ref{propcommute}. Furthermore, notice that 
in the deterministic case ($N_i=0$), we have $\Phi_c(t)=\expn^{At}$ and hence (\ref{matric_commute}) is always given. Based on Proposition \ref{propcommute}, a representation of 
$Q_T$ can be found.
\begin{prop}\label{QTifcommute}
Under the assumptions of Proposition \ref{propcommute}, we have \begin{align*}
                                                              Q_T= \int_0^T \mathbb E\left[\Phi_c^T(t) C^T C \Phi_c(t)\right] dt.
                                                                \end{align*}
\end{prop}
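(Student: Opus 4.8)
The plan is to identify $Q_T$ by showing that the matrix-valued function $H(t):=\mathbb E\!\left[\Phi_c^T(t)\,C^TC\,\Phi_c(t)\right]$ solves the \emph{same} linear matrix ODE \gref{ODEforG} as $G$, with the same initial value $C^TC$. Since such an equation has a unique solution, this forces $H\equiv G$ on $[0,T]$, and integrating over $[0,T]$ gives $Q_T=\int_0^T G(t)\,dt=\int_0^T H(t)\,dt$, which is exactly the claimed formula.

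To carry this out, first transpose the defining relation \gref{funddef} with $v\equiv c$, so that $\Phi_c^T$ solves $d\Phi_c^T(t)=\Phi_c^T(t)A^T\,dt+\sum_{i=1}^q\sqrt c\,\Phi_c^T(t)N_i^T\,dW_i(t)$ with $\Phi_c^T(0)=I$. Applying Itô's product rule to $\Phi_c^T(t)\,C^TC\,\Phi_c(t)$ and using $d\langle W_i,W_j\rangle_t=k_{ij}\,dt$ yields
\[
d\big(\Phi_c^T\,C^TC\,\Phi_c\big)=\Big[\Phi_c^T A^T\,C^TC\,\Phi_c+\Phi_c^T\,C^TC\,A\Phi_c+c\sum_{i,j=1}^q \Phi_c^T N_i^T\,C^TC\,N_j\Phi_c\,k_{ij}\Big]dt+dM(t),
\]
where $M$ is a local martingale. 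Taking expectations annihilates the martingale term — the second moments of $\Phi_c(t)$ are finite because $A$ and the $N_i$ are constant and $v\equiv c$ is bounded, i.e.\ the same integrability already implicitly used for Lemma \ref{lemmatrixeq} — leaving an ODE for $H=\mathbb E[\Phi_c^T\,C^TC\,\Phi_c]$ whose right-hand side still has $A,A^T,N_i,N_i^T$ sitting inside the expectations.

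The decisive step is to move those constant matrices outside the expectations via Proposition \ref{propcommute}: from $A\Phi_c(t)=\Phi_c(t)A$ and $N_i\Phi_c(t)=\Phi_c(t)N_i$ one gets by transposition $A^T\Phi_c^T(t)=\Phi_c^T(t)A^T$ and $N_i^T\Phi_c^T(t)=\Phi_c^T(t)N_i^T$, hence $\Phi_c^T N_i^T\,C^TC\,N_j\Phi_c=N_i^T\big(\Phi_c^T\,C^TC\,\Phi_c\big)N_j$ and similarly $\Phi_c^T A^T\,C^TC\,\Phi_c=A^T\big(\Phi_c^T\,C^TC\,\Phi_c\big)$ and $\Phi_c^T\,C^TC\,A\Phi_c=\big(\Phi_c^T\,C^TC\,\Phi_c\big)A$. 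The ODE then becomes
\[
\dot H(t)=A^T H(t)+H(t)A+c\sum_{i,j=1}^q N_i^T H(t)N_j\,k_{ij},\qquad H(0)=\Phi_c^T(0)\,C^TC\,\Phi_c(0)=C^TC,
\]
which is precisely \gref{ODEforG}. Uniqueness gives $H=G$ on $[0,T]$, and integration finishes the argument.

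I do not anticipate a genuine difficulty here: commutativity is exactly what closes the moment equation, and the only points needing a line of justification are the transposed Itô SDE for $\Phi_c$ and the vanishing of the martingale term under expectation, both routine under the standing boundedness of $v$. As an alternative one can bypass Itô's formula altogether: under commutativity $\Phi_c^T$ satisfies the fundamental equation of the ``adjoint'' system obtained by replacing $(A,N_i)$ with $(A^T,N_i^T)$, so writing $C^TC=\sum_r b_r b_r^T$ and applying Lemma \ref{lemmatrixeq} to that system directly identifies $\mathbb E[\Phi_c^T(t)\,C^TC\,\Phi_c(t)]$ with the solution $G(t)$ of \gref{ODEforG}, and then $Q_T=\int_0^T G(t)\,dt$ as claimed.
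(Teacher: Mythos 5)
Your argument is correct and follows essentially the same route as the paper's proof: apply It\^o's product rule to $\Phi_c^T(t)C^TC\Phi_c(t)$, take expectations so the martingale terms vanish, and use the commutation relations of Proposition \ref{propcommute} to show that $\mathbb E\left[\Phi_c^T(t)C^TC\Phi_c(t)\right]$ solves \gref{ODEforG}, whence $Q_T=\int_0^T\mathbb E\left[\Phi_c^T(t)C^TC\Phi_c(t)\right]dt$. Your version is somewhat more explicit about the transposed SDE, the integrability needed to kill the martingale term, and the uniqueness step, but there is no substantive difference.
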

\begin{proof}
We apply Ito's product rule to $\Phi_c^T(t) C^T C \Phi_c(t)$ and take the correlation of the noise processes into account. This yields
\begin{align*}
&d\left(\Phi_c^T(t) C^T C \Phi_c(t)\right) \\
&= d\left(\Phi_c^T(t)\right) C^T C \Phi_c(t) +  \Phi_c^T(t) C^T C d\left(\Phi_c(t)\right) +  d\left(\Phi_c^T(t)\right) C^T C d\left(\Phi_c(t)\right)\\
&= d\left(\Phi_c^T(t)\right) C^T C \Phi_c(t) +  \Phi_c^T(t) C^T C d\left(\Phi_c(t)\right) +   c \sum_{i, j=1}^q \Phi_c^T(t) N_i^T C^T C N_j  k_{ij} \Phi_c(t) dt.
\end{align*}
Above, we plug in (\ref{funddef}) for the case when $v\equiv c$ and take the expected value on both sides. Hence, using that the Ito integral has mean zero,  we have
\begin{align*}
&d\left(\mathbb E\left[\Phi_c^T(t) C^T C \Phi_c(t)\right]\right) \\
&= \mathbb E\left[\Phi_c^T(t)\left( A^T C^T C + C^T C A +  c\sum_{i, j=1}^q N_i^T C^T C N_j  k_{ij} \right)\Phi_c(t)\right] dt.
\end{align*}
Due to (\ref{matric_commute}) we see that $\mathbb E\left[\Phi_c^T(t) C^T C \Phi_c(t)\right]$, $t\in [0, T]$, solves (\ref{ODEforG}) and thus $Q_T= \int_0^T \mathbb E\left[\Phi_c^T(t) C^T C \Phi_c(t)\right] dt$.
\end{proof}\\
Inequality (\ref{boundout}) and Proposition \ref{QTifcommute} now imply that \begin{align}\label{outencom}
\mathbb E\int_0^T \left\|y(t) \right\|_2^2dt \leq \mathbb E\int_0^T \left\|C \Phi_c(t) x_0 \right\|_2^2dt = x_0^T Q_T x_0.
 \end{align}
Initial states that are spanned by eigenvectors of $Q_T$ belonging to the small eigenvalues lead to a small right-side in (\ref{outencom}) and consequently yield a small output $y$.
Hence, we know that eigenspaces of $Q_T$ corresponding to the small eigenvalues are less relevant in (\ref{output}). 
\paragraph{General case}
We find another bound on the energy of $y_c$ and hence also for $y$ in the general case.
\begin{prop}
If $y$ is the quantity of interest in system (\ref{original_system}) and $Q_T$ the solution of (\ref{gengenlyapobs}), then 
 \begin{align}\label{outenergiewithremainder}
\mathbb{E} \int_0^T\left\| y(t)\right\|_2^2 dt \leq x_0^T Q_T x_0 +\mathcal R(T),   
                             \end{align}
where $\mathcal R(T):= \mathbb{E} \int_0^T x^T_c(t; x_0) G(T) x_c(t; x_0) dt - \mathbb{E} \left[x_c^T(T; x_0) Q_T x_c(T; x_0)\right]$ with $G$ solving (\ref{ODEforG}).
\end{prop}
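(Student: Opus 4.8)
The plan is to promote the inequality \eqref{boundout} to an \emph{exact} identity for the output energy of the bounded-volatility model, from which $\mathcal R(T)$ can then be read off. Since $y_c(t)=Cx_c(t;x_0)$, linearity of the trace gives $\|y_c(t)\|_2^2=\trace\!\big(Cx_c(t;x_0)x_c^T(t;x_0)C^T\big)=x_c^T(t;x_0)C^TCx_c(t;x_0)$, so by \eqref{boundout} it suffices to prove that $\mathbb E\int_0^T x_c^T(t;x_0)\,C^TC\,x_c(t;x_0)\,dt$ equals $x_0^TQ_Tx_0+\mathcal R(T)$. First I would eliminate $C^TC$ with the help of the defining generalized Lyapunov equation \eqref{gengenlyapobs}, substituting $C^TC=G(T)-\big(A^TQ_T+Q_TA+c\sum_{i,j=1}^q N_i^TQ_TN_jk_{ij}\big)$ under the integral. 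This splits the energy into $\mathbb E\int_0^T x_c^T(t;x_0)\,G(T)\,x_c(t;x_0)\,dt$, which is exactly the first summand defining $\mathcal R(T)$, plus a correction term governed by $A^TQ_T+Q_TA+c\sum_{i,j}N_i^TQ_TN_jk_{ij}$.

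Second, I would identify that correction by applying Ito's product rule to $t\mapsto x_c^T(t;x_0)Q_Tx_c(t;x_0)$. With $dx_c=Ax_c\,dt+\sqrt c\sum_{i=1}^q N_ix_c\,dW_i$ (recall $v\equiv c$) and the cross-variations $d\langle W_i,W_j\rangle_t=k_{ij}\,dt$ coming from $\mathbb E[W(t)W^T(t)]=\mathbf K t$, the drift of $d\!\left(x_c^TQ_Tx_c\right)$ equals $x_c^T\!\big(A^TQ_T+Q_TA+c\sum_{i,j}N_i^TQ_TN_jk_{ij}\big)x_c$, while the $dW_i$-terms are martingales with zero mean, since the integrands multiplying $dW_i$ lie in $L^2(\Omega\times[0,T])$: $x_c$ solves the linear SDE \eqref{stochstatenew} with bounded coefficients and hence has second moments uniformly bounded on $[0,T]$. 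Taking expectations and integrating over $[0,T]$ then gives
\begin{align*}
\mathbb E\int_0^T x_c^T(t;x_0)\Big(A^TQ_T+Q_TA+c\sum_{i,j=1}^q N_i^TQ_TN_jk_{ij}\Big)x_c(t;x_0)\,dt=\mathbb E\!\left[x_c^T(T;x_0)Q_Tx_c(T;x_0)\right]-x_0^TQ_Tx_0.
\end{align*}
Substituting this back into the split of the first step gives $\mathbb E\int_0^T\|y_c(t)\|_2^2\,dt=x_0^TQ_Tx_0+\mathcal R(T)$, and combining with \eqref{boundout} finishes the proof.

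The only genuinely delicate point is the vanishing of the stochastic-integral contributions under the expectation; the remaining steps are bookkeeping with the symmetric entries $k_{ij}$ of $\mathbf K$. As an aside, since $\mathbb E[x_c(t;x_0)x_c^T(t;x_0)]$ solves \eqref{matrixequal} and $G$ solves \eqref{ODEforG} with $G(0)=C^TC$, a trace-adjoint (semigroup duality) argument shows that the two contributions in $\mathcal R(T)$ actually coincide, so that $\mathcal R(T)=0$; nevertheless, keeping it as an explicit remainder is natural here, because in the general (non-commuting) case the reduction is built on $Q_T$ alone --- obtained from \eqref{gengenlyapobs} --- and $\mathcal R(T)$ is precisely the discrepancy that then has to be accounted for.
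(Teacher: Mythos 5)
Your proof is correct and takes essentially the same route as the paper's: the paper obtains the identity $\mathbb{E}\left[x_c^T(T)Q_Tx_c(T)\right]=x_0^TQ_Tx_0+\mathbb{E}\int_0^T x_c^T\big(A^TQ_T+Q_TA+c\sum_{i,j}N_i^TQ_TN_jk_{ij}\big)x_c\,dt$ by combining Lemma \ref{lemmatrixeq} with trace cyclicity, which is precisely your It\^o computation on the quadratic form rewritten in covariance-matrix language, and then substitutes \eqref{gengenlyapobs} exactly as you do. Your closing aside is a genuine (and correct) strengthening: since $\langle G(T),X_c(t)\rangle_F=\langle C^TC,X_c(T+t)\rangle_F=\langle G(t),X_c(T)\rangle_F$ by semigroup duality for the Lyapunov operator and its adjoint, one gets $\mathcal R(T)=0$ identically, whereas the paper only observes that $\mathcal R(T)\rightarrow 0$ under mean square asymptotic stability.
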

\begin{proof}
We make use of \begin{align}\label{relationwtrace}
\mathbb{E} \left[x_c^T(T; x_0) Q_T x_c(T; x_0)\right] = \trace(Q_T\mathbb{E} \left[x_c(T; x_0)x_c^T(T; x_0)\right]).                    
                            \end{align}
We obtain from Lemma \ref{lemmatrixeq} that\begin{align*}
            Q_T\mathbb{E} \left[x_c(T; x_0)x_c^T(T; x_0)\right]=\; &Q_T x_0 x_0^T + \mathbb{E} \int_0^T  Q_TA x_c(t; x_0)x_c^T(t; x_0) dt\\
            &+\mathbb E\int_0^T Q_T x_c(t; x_0)x_c^T(t; x_0)A^T dt\\
            &+c \sum_{i, j=1}^q\mathbb E\int_0^T  Q_T N_i x_c(t; x_0)x_c^T(t; x_0) N_j^T k_{ij} dt.
                                       \end{align*}
Using properties of the trace, (\ref{relationwtrace}) becomes
\begin{align*}
&\mathbb{E} \left[x_c^T(T; x_0) Q_T x_c(T; x_0)\right] \\&= x_0^T Q_T x_0+\mathbb{E} \int_0^T x_c^T(t; x_0) \left(A^T Q_T+Q_T A+c \sum_{i, j=1}^q N_i^T Q_T N_j q_{ij} \right) x_c(t; x_0) ds.                
                            \end{align*}
 We insert equation (\ref{gengenlyapobs}) into the above identity to get \begin{align}\nonumber
&\mathbb{E} \left[x_c^T(T; x_0) Q_T x_c(T; x_0)\right] = x_0^T Q_T x_0+\mathbb{E} \int_0^T x^T_c(t; x_0) (G(T)-C^T C) x_c(t; x_0) dt   \\ \label{soobsgram}    
 &= x_0^T Q_T x_0-\mathbb{E} \int_0^T\left\| y_c(t)\right\|_2^2 dt+\mathbb{E} \int_0^T x^T_c(t; x_0) G(T) x_c(t; x_0) dt
                             \end{align}
This, together with (\ref{boundout}), gives us the result.
 \end{proof}\\
 Assuming that the remainder term $\mathcal R(T)$ is not too large, the same conclusions as below 
 (\ref{outencom}) can be made. The eigenspaces that belong to the small eigenvalues of $Q_T$ are unimportant. If the system is mean square asymptotic stable, then 
 $Q_\infty:=\lim_{T\rightarrow \infty}Q_T$ exists and $\mathcal R(T)\rightarrow 0$ as $T\rightarrow \infty$. Taking the limit of $T\rightarrow \infty$ in (\ref{outenergiewithremainder})
 would then lead to a characterization of the output energy  by $Q_\infty$ without a remainder term. $Q_\infty$ is also easier to determine than $Q_T$ since it solves (\ref{gengenlyapobs}) with $G(T)=0$.
 Energy estimates based on $Q_\infty$ are shown in \cite{morBenD11,redmannbenner, redmannspa2} if the variance $v$ is constant.\smallskip
 
We can also get to a more explicit bound by applying Gronwall's lemma to (\ref{soobsgram}) if 
 $Q_T$ is regular. Defining $\alpha(T) := x_0^T Q_T x_0-\mathbb{E} \int_0^T\left\| y_c(t)\right\|_2^2 dt$ (\ref{soobsgram}) becomes  \begin{align*}
\mathbb{E} \left[\left\|Q_T^{\frac{1}{2}} x_c(t; x_0) \right\|_2^2\right] &= \alpha(T)+\mathbb{E} \int_0^T \left\|G^{\frac{1}{2}}(T) x_c(t; x_0) \right\|_2^2 dt\\
&\leq \alpha(T)+ k_T \mathbb{E} \int_0^T \left\|Q_T^{\frac{1}{2}} x_c(t; x_0) \right\|_2^2 dt
\end{align*}
where $k_T:= \left\|G^{\frac{1}{2}}(T)Q_T^{-\frac{1}{2}}\right\|_2^2$. Gronwall's lemma leads to \begin{align*}
0\leq\mathbb{E} \left[\left\|Q_T^{\frac{1}{2}} x_c(t; x_0) \right\|_2^2\right]\leq \alpha(T)+\mathbb{E} \int_0^T \alpha(t) k_T \expn^{k_T (T-t)}  dt.
\end{align*}
With a few more steps, we find \begin{align*}
\mathbb{E} \int_0^T\left\| y(t)\right\|_2^2 dt \leq x_0^T Q_T x_0 \expn^{k_T T},   
                             \end{align*}
but this bound cannot be expected to be tight.
                             
\section{State-space transformation and reduced-order model}\label{subsecprocedure}

Balancing related MOR like balanced truncation were initially invented for controlled linear deterministic systems that are asymptotically stable and have zero initial states \cite{morMoo81}. 
Balanced truncation has been extended to stochastic systems with similar properties \cite{morBenD11, redmannbenner}. Subsequently, this scheme was studied for deterministic and stochastic systems with non-zero initial 
conditions \cite{inhomInitial, stochInhom}. However, all these methods are restricted to stable systems. A method for deterministic equations called time-limited balanced truncation aiming to create a good reduced system on 
a finite time interval only was introduced in \cite{morGawJ90}. As pointed out in \cite{morK17}, this method has some potential in the context of unstable systems.\smallskip

The method explained below is a combination of all the methods mentioned above. It follows the same concept which is simultaneously diagonalizing system Gramians. Here, the Gramians are $P_T$ and $Q_T$ solving 
(\ref{comPT}) and (\ref{gengenlyapobs}), respectively.
We have shown the relevance of these Gramians in Section \ref{sec:reach}. Diagonalizing both $P_T$ and $Q_T$ means that we create a system in which the important states in equations (\ref{stochstatenew}) and 
(\ref{output}) are the same. Hence, the unimportant ones can be easily identified and thus truncated.\smallskip


Let $S\in\mathbb{R}^{n\times n}$ be a regular matrix. We do a coordinate transformation by introducing
\[\hat{x}(t) = Sx(t).\]
Based on (\ref{original_system}) the associated system is
\begin{subequations}\label{balancingtransformation}
\begin{align}\label{balstate}
d\hat{x}(t)&=\hat{A} \hat{x}(t)dt+\sum_{i=1}^q\sqrt{v(t)} \hat N_i x(t) dW_i(t),\quad \hat x(0)= Sx_0 = SBz,\\ \label{balout}
y(t)&= \hat{C} \hat{x}(t),\quad t\in [0, T],
\end{align}
\end{subequations}
where $\hat{A} = SAS^{-1}$, $\hat{B}=SB$, $\hat{C} =CS^{-1}$ and $\hat{N_i} = SN_i S^{-1}$. Notice that the quantity of interest does not change with this transformation.
However, the matrices characterizing the importance of states in (\ref{balstate}) and (\ref{balout}) are different ones. For the transformed system 
(\ref{balancingtransformation}), these become \begin{align*}
\hat{P}_T =S P_T S^T\quad \text{and}\quad \hat{Q}_T = S^{-T} Q_T S^{-1}.
\end{align*}
The above relation is obtained by multiplying (\ref{matrixequalforF}) with $S$ from the left and with $S^T$. Moreover, (\ref{ODEforG}) needs to be multiplied with $S^{-T}$ from the left and with 
$S^{-1}$ from the right.\smallskip

We now choose $S$ such that $\hat{P}_T = \hat{Q}_T = \Sigma_T = \diag(\sigma_{1},\ldots,\sigma_n)$, where $\sigma_1\ge\ldots\ge\sigma_n> 0$ are called Hankel singular values (HSVs) 
and given by $\sigma_i =\sqrt{\lambda_i(P_TQ_T)}$, where $\lambda_i(\cdot)$ denotes the $i$th eigenvalue of the matrix and $i=1,\ldots,n$. Such a system is called balanced. A transformation like this 
always exists if $P_T, Q_T>0$. It is, together with its inverse, derived the following way: 
\[
S=\Sigma_T^{-\frac{1}{2}}U^T L_Q^T \quad\text{and}\quad S^{-1}=K_PV\Sigma_T^{-\frac{1}{2}}.
\]
The above matrices are computed from factorizations $P_T = K_PK_P^T$ and $Q_T=L_QL_Q^T$ as well as from the singular value decomposition of $K_P^T L_Q = V\Sigma U^T$. \smallskip

In a balanced system, it is easy to identify the unimportant states. They are the ones corresponding to the small HSVs of the system and represented by $x_2$ given by the partition of the balanced state variable 
\begin{align*}
 \hat x(t) = S x(t) = \smat x_1(t)\\ x_2(t)  \srix,
\end{align*}
where $x_1(t)\in \mathbb R^{\tilde n}$ represents the relevant states in the system dynamics. Furthermore, we partition the balanced realization as follows:
\begin{align*}
S{A}S^{-1}= \smat{A}_{11}&{A}_{12}\\ 
{A}_{21}&{A}_{22}\srix,\quad S{B} = \smat{B}_1\\ {B}_2\srix,\quad {CS^{-1}} = \smat{C}_1 &{C}_2\srix,\quad S{N_i}S^{-1}= \smat{N}_{i, 11}&{N}_{i, 12}\\ 
{N}_{i, 21}&{N}_{i, 22}\srix, \end{align*}
where ${A}_{11}\in\R^{\tilde n\times \tilde n}$ etc. With this, system (\ref{balancingtransformation}) becomes
\begin{align}\label{statpartsys}
\smat d x_1\\ d x_2  \srix&=\smat{A}_{11}&{A}_{12}\\ 
{A}_{21}&{A}_{22}\srix \smat x_1\\ x_2\srix dt+\sum_{i=1}^q\sqrt{v} \smat{N}_{i, 11}&{N}_{i, 12}\\ 
{N}_{i, 21}&{N}_{i, 22}\srix \smat x_1\\ x_2\srix dW_i,\quad \hat x(0)= \smat{B}_1\\ {B}_2\srix z,\\ \label{outtranssys}
y(t)&= \smat{C}_1 &{C}_2\srix \smat x_1(t)\\ x_2(t)\srix,\quad t\in [0, T].
\end{align}
The time dependence is omitted in (\ref{statpartsys}) to shorten the notation.
The reduced system of dimension $\tilde n \ll n$ is now obtained by neglecting $x_2$, i.e., the second line in (\ref{statpartsys}) is truncated the remaining $x_2$ variables are set zero in both the first line of 
(\ref{statpartsys}) and in (\ref{outtranssys}). The reduced-order model then is 
\begin{subequations}\label{generalreducedsys}
\begin{align}\label{romstate}
d\tilde x(t)&=A_{11} \tilde x(t)dt+\sum_{i=1}^q \sqrt{v(t)} {N}_{i, 11} \tilde x(t)dW_i(t),\quad \tilde x(0)=B_1 z,\\ \label{outrom}
\tilde y(t)&= C_1 \tilde x(t),\quad t\in [0, T].
\end{align}
\end{subequations}
where $A_{11}, N_{i, 11}\in\mathbb R^{\tilde n \times \tilde n}$, $B_1 \in\mathbb R^{\tilde n\times m}$ and $C_1\in\mathbb R^{p\times \tilde n}$.

\section{Error bound analysis}\label{h2boundsec}

We introduce an error system by combining (\ref{stochstatenew}) and (\ref{romstate}) with an output equation that represents the error between (\ref{output}) and (\ref{outrom}). The error system is
 \begin{equation}\label{errorsys:original}
\begin{aligned}
d x^e (t) &= A^e x^e(t) dt + \sum_{i=1}^q \sqrt{v(t)} N_i^e x^e(t) dW_i(t),\quad x^e(0) = B^e z,\\
y^e(t) &= C^e x^e(t),\quad t\in [0, T],
\end{aligned}
\end{equation}
where the error state $x^e$ and the error matrices $(A^e, B^e, C^e, N_i^e)$ are
\begin{align}\label{parterrormatr}
x^e =\smat x \\ \tilde x \srix,\; A^e = \smat{A}& 0\\ 
0 &A_{11}\srix,\;B^e=\smat B \\  B_1\srix,\;C^e= \smat C & -C_1 \srix,\; N_i^e=\smat {N}_i& 0 \\ 
0 &N_{i, 11}\srix.
            \end{align} 
Let us again assume that an index $c$ indicates that $v$ in (\ref{errorsys:original}) is replaced by $c$. We obtain 
 \begin{align*}
\mathbb E\int_0^T \left\|y^e(t) \right\|_2^2dt \leq \mathbb E\int_0^T \left\|y^e_c(t) \right\|_2^2dt
 \end{align*}
the same way as in (\ref{boundout}). Based on the fundamental solution $\Phi^e$ (or $\Phi^e_c$), the quantity of interest given a constant volatility function, is represented by $y^e_c(t) = C^e\Phi_c^e(t) B^e z$. Plugging this into the
above inequality yields 
\begin{align}\nonumber
\mathbb E\int_0^T \left\|y^e(t) \right\|_2^2dt \leq &\mathbb E\int_0^T \left\|C^e\Phi_c^e(t) B^e z\right\|_2^2dt \leq \mathbb E\int_0^T \left\|C^e\Phi_c^e(t) B^e\right\|_F^2dt \left\|z\right\|_2^2\\
&=\trace(C^e P_T^e (C^e)^T) \left\|z\right\|_2^2,\label{firsterror}
 \end{align}
where we set $P_T^e:=\int_0^T F^e(t) dt$ with $F^e(t)= \Phi_c^e(t) B^e (B^e)^T (\Phi_c^e)^T(t)$. Analogue to (\ref{matrixequalforF}), $F^e$ solves 
\begin{align}\label{matrixequalforFerrorsys}
{\dot F}^e(t) = A^eF^e(t)+F^e(t)(A^e)^T+c \sum_{i, j=1}^q N^e_i F^e(t) (N_j^e)^T k_{ij},\quad F^e(0)=B^e(B^e)^T.
    \end{align}
We partition the solution to (\ref{matrixequalforFerrorsys}) as follows
\begin{align}\label{decomF}
F^e(t) = \smat{F_{11}(t)}& F_{12}(t)\\ F_{12}^T(t)&{F_{22}(t)}\srix
            \end{align} 
and see that $F_{11}(t)= F(t)$ solving (\ref{matrixequalforF}) as well as $F_{12}(t)= \bar F(t)$ and $F_{22}(t)= \tilde F(t)$ that are the solutions to
\begin{align}\label{matrixequalforFred}
\dot {\bar F}(t) &= A\bar F(t)+\bar F(t)A_{11}^T+c \sum_{i, j=1}^q N_i \bar F(t) N_{j, 11}^T k_{ij},\quad \bar F(0)=BB_1^T\\ \label{matrixequalforFmixed}
\dot {\tilde F}(t) &= A_{11}\tilde F(t)+\tilde F(t)A_{11}^T+c \sum_{i, j=1}^q N_{i, 11} \tilde F(t) N_{j, 11}^T k_{ij},\quad \tilde F(0)=B_1B_1^T
    \end{align}
using the partitions in (\ref{parterrormatr}). From (\ref{firsterror}) and (\ref{decomF}), we obtain
\begin{align}\nonumber
&\mathbb E\int_0^T \left\|y(t)-\tilde y(t) \right\|_2^2dt=\mathbb E\int_0^T \left\|y^e(t) \right\|_2^2dt \\ \label{absoutbound}
&\leq \left(\trace(C P_T C^T)-2 \trace(C \bar P_T C_1^T)+\trace(C_1 \tilde P_T C_1^T) \right)\left\|z\right\|_2^2, \end{align}
where $\bar P_T:=\int_0^T \bar F(t)dt$ and $\tilde P_T:=\int_0^T \tilde F(t)dt$. By Integrating both (\ref{matrixequalforFred}) and (\ref{matrixequalforFmixed}), the equations for these two matrices are 
\begin{align}\label{gramequalforFred}
\bar F(T)-BB_1^T &= A\bar P_T+\bar P_T A_{11}^T+c \sum_{i, j=1}^q N_i \bar P_T N_{j, 11}^T k_{ij},\\ \label{gramequalforFmixed}
\tilde F(T)-B_1B_1^T &= A_{11}\tilde P_T+\tilde P_T A_{11}^T+c \sum_{i, j=1}^q N_{i, 11} \tilde P_T N_{j, 11}^T k_{ij}.
    \end{align}
Since $P_T$ is already known from the balancing procedure explained in Section \ref{subsecprocedure}, the bound for the absolute output error in (\ref{absoutbound}) requires only the computation of $\bar P_T$ and 
$\tilde P_T$. Since the reduced dimension $\tilde n$ is rather small, the corresponding equations (\ref{gramequalforFred}) and (\ref{gramequalforFmixed}) can often be solved directly through vectorization. Hence, we have
\begin{align*}
 \vect(\bar F(T))-\vect(BB_1^T) &= \bar{\mathcal K} \vect(\bar P_T),\\ 
\vect(\tilde F(T))-\vect(B_1B_1^T) &= \tilde{\mathcal K} \vect(\tilde P_T)
\end{align*}
with $\bar F(T)=\expn^{\bar{\mathcal K}T}\vect(BB_1^T)$ and $\tilde F(T)=\expn^{\tilde{\mathcal K}T}\vect(B_1B_1^T)$, where 
\begin{align*}
 \bar{\mathcal K}&:=\left(I_n\otimes A_{11} + A\otimes I_{\tilde n}+ c \sum_{i, j=1}^q N_i \otimes N_{j, 11} k_{ij}\right),\\
  \tilde{\mathcal K}&:=\left(I_{\tilde n}\otimes A_{11} + A_{11}\otimes I_{\tilde n}+ c \sum_{i, j=1}^q N_{i, 11} \otimes N_{j, 11} k_{ij}\right).
\end{align*}
Above, the identity matrices are equipped with an index indicating the respective dimension. With (\ref{absoutbound}) a bound
for the absolute error of reducing system (\ref{original_system}) was found. However, the relative error is more interesting to be 
analyzed. Therefore, we need a computable lower bound for the $L^2$-norm of $y$. This task is relatively simple because the inequality of Cauchy Schwartz yields \begin{align*}
    \int_0^T \left\|\mathbb E[y(t)]\right\|_2^2dt   \leq   \mathbb E\int_0^T \left\|y(t)\right\|_2^2dt.                                                                                                                                         
                                                                                                                                                      \end{align*}
Now, $\mathbb E[x(t)]$, $t\in [0, T]$, solves equation (\ref{stochstatenew}) with $c=0$ which can be seen easily by applying the expected value to both sides of (\ref{stochstatenew}) and by exploiting that the Ito integrals
have zero mean. We use that $\Phi_{c=0}(t)=\expn^{A t}$ such that $\mathbb E[y(t)]= C \expn^{A t} B z$. We plug this into the above estimate leading to \begin{align}\label{lowerboundy}
 \mathbb E\int_0^T \left\|y(t)\right\|_2^2dt\geq  \int_0^T \left\|C \expn^{A t} B z\right\|_2^2dt = z^T B^T Q_{T, 0} B z,   .                                                                                                                                         
                                                                                                                                                      \end{align}
where $Q_{T, 0}:= \int_0^T \expn^{A^T t} C^T C \expn^{A t} dt$. According to Subsection \ref{diffobs}, $Q_{T, 0}$ solves (\ref{gengenlyapobs}) with $c=0$, i.e., 
\begin{align}\label{gengenlyapobsdet}
\expn^{A^T T} C^T C \expn^{A T}- C^T C = A^T Q_{T, 0}+Q_{T, 0} A, 
\end{align}
an equation that can be solved in a large-scale setting, since there are efficient methods to determine $\expn^{A T}$ for large $n$. We summarize the results of this section in a theorem below, where 
we set $\left\|y\right\|_{L^2}^2:= \mathbb E\int_0^T \left\|y(t)\right\|_2^2dt$.
\begin{thm}\label{thmerrorbound}
Let $y$ be the output of the original system (\ref{original_system}) and let $\tilde y$ be the output of the reduced model (\ref{generalreducedsys}). Then, the relative $L^2$-error between $y$ and $\tilde y$ is bounded as follows:
\begin{align}\label{ebnumerics}
\frac{ \left\|y-\tilde y\right\|_{L^2} }{\left\|y\right\|_{L^2}}\leq \frac{\left(\trace(C P_T C^T)-2 \trace(C \bar P_T C_1^T)+\trace(C_1 \tilde P_T C_1^T)\right)^{\frac{1}{2}} \left\|z\right\|_2}{\left(z^T B^T Q_{T, 0} B z\right)^{\frac{1}{2}}}
\end{align}
where $P_T$, $\bar P_T$, $\tilde P_T$ and $Q_{T, 0}$ are the solutions to (\ref{comPT}), (\ref{gramequalforFred}), (\ref{gramequalforFmixed}) and (\ref{gengenlyapobsdet}), respectively.
\end{thm}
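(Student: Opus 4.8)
The plan is to assemble the bound from the two one-sided estimates already prepared in this section: the absolute-error bound \gref{absoutbound} and the lower bound \gref{lowerboundy}. First I would work with the error system \gref{errorsys:original}, whose coefficient matrices \gref{parterrormatr} are block-diagonal (except for $C^e$). Applying the same domination argument as in \gref{boundout} --- which rests on Lemma \ref{lem3} and on the fact that $L^e(X):=A^e X + X(A^e)^T + c\sum_{i,j=1}^q N_i^e X (N_j^e)^T k_{ij}$ is a resolvent positive operator on the symmetric matrices, a property inherited from the $2\times2$ block-diagonal form of $A^e$ and the $N_i^e$ --- one replaces $v$ by its upper bound $c$ to get $\mathbb{E}\int_0^T\|y^e(t)\|_2^2\,dt \le \mathbb{E}\int_0^T\|y_c^e(t)\|_2^2\,dt$. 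Writing $y_c^e(t)=C^e\Phi_c^e(t)B^e z$ with $\Phi_c^e$ the fundamental solution at $v\equiv c$, pulling $\|z\|_2^2$ out via Cauchy--Schwarz, and using linearity of the trace exactly as in \gref{firsterror} gives $\|y-\tilde y\|_{L^2}^2 \le \trace(C^e P_T^e (C^e)^T)\|z\|_2^2$ with $P_T^e=\int_0^T\mathbb{E}[\Phi_c^e(t)B^e(B^e)^T(\Phi_c^e)^T(t)]\,dt$.

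Second I would exploit the block-diagonal structure of $A^e$ and the $N_i^e$: partitioning $F^e$ as in \gref{decomF}, the three blocks decouple into the closed linear matrix ODEs \gref{matrixequalforF}, \gref{matrixequalforFred} and \gref{matrixequalforFmixed}, hence $P_T^e$ has blocks $P_T$, $\bar P_T$ and $\tilde P_T$. Expanding $\trace(C^e P_T^e (C^e)^T)$ with $C^e=\smat C & -C_1\srix$ produces exactly $\trace(CP_TC^T)-2\trace(C\bar P_TC_1^T)+\trace(C_1\tilde P_TC_1^T)$, i.e.\ the numerator of \gref{ebnumerics}; this is precisely \gref{absoutbound}, and the equations \gref{gramequalforFred}, \gref{gramequalforFmixed} follow by integrating \gref{matrixequalforFred}, \gref{matrixequalforFmixed} over $[0,T]$.

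Third, for the denominator I would apply the Cauchy--Schwarz (Jensen) inequality $\int_0^T\|\mathbb{E}[y(t)]\|_2^2\,dt \le \mathbb{E}\int_0^T\|y(t)\|_2^2\,dt$, then observe that taking expectations in \gref{stochstatenew} annihilates the It\^o integrals, so $\mathbb{E}[x(t)]$ solves \gref{stochstatenew} with $c=0$ and $\Phi_{c=0}(t)=\expn^{At}$; therefore $\mathbb{E}[y(t)]=C\expn^{At}Bz$ and $\|y\|_{L^2}^2 \ge z^T B^T Q_{T,0} B z$ as in \gref{lowerboundy}, with $Q_{T,0}$ solving \gref{gengenlyapobsdet} by the argument of Subsection~\ref{diffobs} specialized to $c=0$. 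Taking square roots of the numerator and denominator bounds and dividing then yields \gref{ebnumerics}.

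The only step requiring genuine care is the first one: one must check that the domination \gref{estimateforcov}--\gref{boundout} and the block decoupling both survive the passage to the augmented error system, namely that $L^e$ is still resolvent positive and that the block-diagonal form of $(A^e,N_i^e)$ forces $F_{11},F_{12},F_{22}$ to satisfy their own autonomous equations. Both facts are immediate consequences of the $2\times2$ block structure, and once they are recorded the remainder of the argument is bookkeeping with traces of symmetric positive semidefinite matrices.
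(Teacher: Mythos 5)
Your proposal is correct and follows essentially the same route as the paper: the theorem is obtained by combining the absolute-error bound \gref{absoutbound} (derived via the augmented error system \gref{errorsys:original}, the domination argument of Lemma \ref{lem3} applied to its resolvent positive Lyapunov operator, and the block decoupling of $F^e$ into $F$, $\bar F$, $\tilde F$) with the lower bound \gref{lowerboundy} coming from Jensen's inequality and the deterministic mean dynamics. Your explicit check that resolvent positivity and the block decoupling survive the passage to the error system is exactly the point the paper leaves implicit ("the same way as in \gref{boundout}"), and your inclusion of the expectation in the definition of $F^e$ is the correct reading.
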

\begin{proof}
The result follows from (\ref{absoutbound}) and (\ref{lowerboundy}).
\end{proof}
The bound in Theorem \ref{thmerrorbound} provides a good a priori error estimate, an indicator for the quality of the reduced system. Error bounds for related methods in a deterministic framework can be found in 
\cite{morGugA04, redmannL2T_TLBT, redmannkuerschner}.
\begin{remark}\label{remark_bound}
Notice that if we aim to reduce a Black-Scholes model with output $y_c$ instead of a Heston model with output $y$, the energy $\left\|y_c\right\|_{L^2}$ is explicitly known according to Section \ref{diffobs} given that 
$A, N_1, \ldots, N_q$ commute. Then, it holds that $\left\|y_c\right\|_{L^2}=x_0^T Q_T x_0$ such that we can replace $Q_{T, 0}$ by $Q_{T}$ in Theorem \ref{thmerrorbound}.
\end{remark}

\begin{remark}
  \label{rem:error-payoff}
  For option pricing, we typically consider expectations of $f(y(t))$ for some
  payoff function $f$. If $f$ is Lipschitz (e.g., for put and call options),
  then the error bound of Theorem~\ref{thmerrorbound} immediately carries
  over. However, there are relevant financial options with even discontinuous
  payoffs, for instance digital options. In this case, a general error
  analysis is difficult. However, we would like to point out that financial
  models often have inherent smoothing properties, which allow us to
  effectively mollify the payoff without adding additional bias. We refer to
  \cite{BST18} for an application of this property to option pricing with QMC
  and adaptive sparse grids quadrature methods.
\end{remark}

\section{Numerical experiments}\label{numerics}

We apply the MOR technique motivated in Section \ref{sec:reach} and explained in Section \ref{subsecprocedure}.
The goal is to accurately approximate payoff functions associated with the large asset price model (\ref{stochstatenew}) (these are functions of the quantity of interest in (\ref{output})) 
by payoff functions of the reduced system (\ref{generalreducedsys}). This type of problem is of particular interest if we price European options with an underlying high-dimensional Heston model because 
computational complexity can be reduced. Moreover, since the reduced system shows good pathwise approximations, it can be of interest in the context of Bermudan options because regression based methods \cite{LS2001, TV2001}
 suffer from 
the curse of dimensionality which makes them inaccurate in a large-scale setting. Below, we consider a particular Heston model (\ref{original_system}) and illustrate the quality of the reduction in dependence
of the covariance matrix $\mathbf K$ of the noise process.\smallskip


We consider the following linear stochastic differential equation that represents an asset price model:
\begin{align}\label{scholesnum}
             dx_i(t)&=\mathbf{\mathrm{r}} x_i(t)dt+ \xi_i\sqrt{v(t)} x_i(t) dW_i(t),\quad x_i(0)=x_{0, i},
            \end{align}
where $x_i$ denotes the $i$th component of a price process $x$ ($i=1, \ldots, n$). Moreover, we assume that $\mathbf{\mathrm{r}}=0.02$ is the fixed interest rate and $\xi_i\in[0.2, 0.7]$ are volatility parameter
sampled from a uniform distribution.
Now, we can rewrite equation (\ref{scholesnum}) in order to guarantee the form given in (\ref{stochstatenew}). The respective matrices are \begin{align}\label{specualmat}
 A= \mathbf{\mathrm{r}} I,\quad N_i = \xi_i  e_i e_i^T,\quad B=x_0\quad \text{and}\quad z=1,                                                                                                                                                                                             
                                                                                                                                                                                          \end{align}
where $e_i$ is the $i$th unit vector in $\mathbb R^n$, $q=n$, and assuming that we are interested in a single initial value $x_0$ only. In this particular situation, the matrices $A, N_1, \ldots, N_n$ commute 
and are symmetric. The variance process is $v(t)=\min\{\bar v(t), c\}$, $t\in [0, T]$, where $\bar v$ is the solution to the following stochastic differential equation: \begin{align}\label{unboundedvoleq}
        d\bar v(t)=a(b-\bar v(t))dt+\bar\sigma\sqrt{\bar v(t)}d\bar B(t), \quad v(0)=v_0,                
                       \end{align}
where $b=0.2$ is the long run average variance, $a=0.2$ is the rate characterizing the speed of convergence of the average variance and $\bar\sigma=0.15$ is the volatility of the volatility process.
Furthermore, $\bar B$ is a standard Brownian motion with respect to the filtration $(\mathcal F_t)_{t\in [0, T]}$ that negatively correlated with the other standard Brownian motions $W_i$, i.e.,
$\mathbb E[\bar B(t) W_i(t)] = \rho_i t$, where $\rho_i<0$. The parameters $\xi_i, a, b, \bar \sigma$ are chosen to have an average volatility around $0.2$, i.e., \begin{align*}
\mathbb E \left[\sqrt{v(t)}\right] \frac{1}{n} \sum_{i=1}^n \xi_i\approx 0.2.\end{align*}

Notice that in order to fit the theory, the process $v$ is bounded by a constant $c$ since generally $\bar v$ is unbounded. Practically, we simulate a certain number of paths of $\bar v$ and choose 
a $c$ that represents a bound of these simulated paths such that those
coincide with the respective paths of $v$.

Suppose that the quantity of interest is now some one dimensional partial information $y$ of the price process $x$ that is the form \begin{align}\label{specialoutput}
                    y(t)=C x(t),                                                                                                                            
                                                                                                                                               \end{align}
where the output matrix is $C=[1,\; 1, \ldots 1]$. We now determine a reduced system (\ref{generalreducedsys}). To do so, the matrices $P_T$ and $Q_T$ need to be computed in order to conduct the balancing procedure 
described in Section \ref{subsecprocedure}. Fortunately, these matrices can be derived explicitly from (\ref{matrixequalforF}) and (\ref{ODEforG}). Plugging in (\ref{specualmat}) into these equations, we obtain 
 \begin{align*}
\dot F(t)= 2 \mathbf{\mathrm{r}} I F(t)+c\sum_{i, j=1}^n e_i e_i^T F(t) e_j e_j^T \xi_i \xi_j k_{ij}, \quad F(0)= x_0 x_0^T,\\
\dot G(t)= 2 \mathbf{\mathrm{r}} I G(t)+c\sum_{i, j=1}^n e_i e_i^T G(t) e_j e_j^T \xi_i \xi_j k_{ij}, \quad G(0)=C^T C.
\end{align*}
By multiplying the above equations with $e_i^T$ from the left and with $e_j$ from the right, we can see that these equations can be solved component-wise. The entries of $F(t)=(f_{ij})_{i, j=1, \ldots n}$ satisfy
\begin{align*}
 \dot f_{ij}(t)= (2 \mathbf{\mathrm{r}}+c\xi_i \xi_j k_{ij}) f_{ij}(t), \quad f_{ij}(0)= x_{0,i} x_{0, j},
\end{align*}
such that $f_{ij}(t) = \expn^{h_{ij} t}x_{0,i} x_{0, j}$, where $h_{ij}:= (2 \mathbf{\mathrm{r}}+c\xi_i \xi_j k_{ij})$. Integrating $f_{ij}$ over $[0, T]$, we find that $P_T=(p_{ij})_{i, j=1, \ldots n}$ is given by 
\begin{align*}
 p_{ij} = \frac{\expn^{h_{ij} T}-1}{h_{ij}} x_{0,i} x_{0, j}.
\end{align*}
Analogously, it holds that $Q_T=(q_{ij})_{i, j=1, \ldots n}$ is represented by \begin{align*}
 q_{ij} = \frac{\expn^{h_{ij} T}-1}{h_{ij}} e_i^T C^T C e_j.
\end{align*}
Since $P_T$ and $Q_T$ are given explicitly, the reduced system (\ref{generalreducedsys}) with output $\tilde y$ comes basically for free in terms of computational time. This also means that we are able to derive 
a reduced model if the number of assets is very large.
We investigate the reduction quality for three different covariance
matrices. First we consider a matrix with both small and large correlations between the noise processes. 
We first choose $\mathbf K=\mathbf K_0:=\tau \tau^T$ according to \cite{doust}, 
where $\tau = [\tau_1,\; \tau_2, \ldots \tau_n]$ has columns $\tau_i$ generated by a vector $s=(s_i)_{i=1, \ldots, n-1}$ of samples $s_i$ of independent uniformly distributed random variables with values in $[0.8, 1]$:
\begin{align*}
\tau_1=\left(\begin{smallmatrix}1\\ \cp(s)\end{smallmatrix}\right), \quad \tau_2=\sqrt{1-s_1^2}\left(\begin{smallmatrix}0\\ 1\\ \cp(s_{2:n-1})\end{smallmatrix}\right),\ldots, 
\tau_n=\sqrt{1-s_{n-1}^2}\left(\begin{smallmatrix}0\\ \vdots\\ 0\\1\end{smallmatrix}\right).
\end{align*}
Above, we set $s_{\ell:n-1}:=(s_\ell, s_{\ell+1},\ldots, s_{n-1})^T$ and \begin{align*}
  \cp(s):= \left[s_1, s_1 s_2, \ldots, s_1 s_2 \dots s_{n-1}\right]^T.
                                                                         \end{align*}
Further, we study the two extreme cases of $\mathbf K=I$ (independent noise processes) and $\mathbf K=\mathbf{1}\mathbf{1}^T$ (perfect correlation),
where $\mathbf{1}$ is an $n$-dimensional vector of ones. We choose $n=100$, $T=1$, $c=0.6$ and the initial conditions $x_{0, i}\in [0, 1.25]$ are generated randomly. 
For $\mathbf K=\mathbf K_0$ we choose $\rho_i\in[-0.9, 0)$, in case of $\mathbf K=I$ we have $\rho_i=-0.09$ and we fix $\rho_i=-0.5$ for $\mathbf K=\mathbf{1}\mathbf{1}^T$. In all the numerical experiments below, 
$2$e$06$ samples are generated.

\subsection{Approximation error in the quantity of interest}\label{numericspathwise}
We begin with analyzing the error between the output of the full system $y$ and the output of the reduced system $\tilde y$. The first question is how to choose the dimension $\tilde n$ of system (\ref{generalreducedsys}).
The HSVs, i.e., $\sigma_i = \sqrt{\lambda_i(P_TQ_T)}$ are a very good indicator for a suitable choice since the smaller $\sigma_i$, the less important the $i$th state component $\hat x_i$ in the balanced 
system (\ref{balancingtransformation}) according to what we have derived in Section \ref{sec:reach}. We can see these values in logarithmic scale for different covariance matrices $\mathbf K$ in Figure \ref{hsvplot}. 
In each case, we observe that one variable dominates the dynamics meaning that no matter how $\mathbf K$ is chosen, a scalar reduced-order model already leads to a relatively good approximation.
Moreover, we see that the reduction is expected to be least efficient if all noise processes are independent due to a slow decay of the HSVs. However, 
with perfect correlation ($\mathbf K=\mathbf{1}\mathbf{1}^T$), we only have four non zero HSVs meaning that the $100$-dimensional model can be perfectly approximated by a system of four variables. In the case of
$\mathbf K=\mathbf K_0$
the performance is in between the independent and perfectly correlated scenarios. This fits to our general observation that the higher the correlation, the better the algorithm works.\smallskip

We conclude this subsection by a discussion on the error between the outputs $y$ and $\tilde y$ of systems (\ref{original_system}) and (\ref{generalreducedsys}) for this particular example. We determine the 
relative $L^2$-error and the corresponding error bound in Theorem  \ref{thmerrorbound}. This bound that is the right-side of (\ref{ebnumerics}) is denoted by $\mathcal{EB}$ here. 
The error bound is relatively tight for the example. In most of the cases it estimates the exact error by factor of three, see Tables \ref{table_computation}, \ref{table_computation2} and \ref{table_computation3}. 
As supposed from the HSVs, very good results are obtained for very small $\tilde n$ if $\mathbf K=\mathbf{1}\mathbf{1}^T$, compare with Figure \ref{errorK0}. However, in the situation of $\mathbf K=\mathbf K_0$,  
a reduced-order between $4$ and $10$ shows a low error, too.
\begin{table}[th]
\centering
\begin{minipage}{.48\linewidth}
\begin{tabular}{|c|c|c|}\hline
$\tilde n$  & $\left\|y-\tilde y\right\|_{L^2} /\left\|y\right\|_{L^2}$ & $\mathcal{EB}$\\
\hline
\hline
$1$ & $5.10$e$-03$& $1.55$e$-02$\\
$4$ & $2.02$e$-03$ & $6.18$e$-03$\\
$10$ & $7.01$e$-04$ & $2.17$e$-03$\\
$15$ & $3.95$e$-04$ & $1.23$e$-03$ \\
$20$ & $2.39$e$-04$ & $7.46$e$-04$ \\
$25$ & $1.62$e$-04$ & $5.06$e$-04$ \\
$50$ & $3.56$e$-05$ & $1.13$e$-04$ \\
\hline
\end{tabular}\caption{Relative $L^2$-error and error bound for $\mathbf K=\mathbf K_0$.}
\label{table_computation}
\end{minipage}
\begin{minipage}{.48\linewidth}
\begin{tabular}{|c|c|c|}\hline
$\tilde n$  & $\left\|y-\tilde y\right\|_{L^2} /\left\|y\right\|_{L^2}$ & $\mathcal{EB}$\\
\hline
\hline
$1$ & $2.56$e$-03$& $7.79$e$-03$\\
$4$ &  $2.23$e$-03$ & $6.79$e$-03$\\
$10$ & $1.69$e$-03$ & $5.12$e$-03$\\
$15$ & $1.46$e$-03$ & $4.42$e$-03$ \\
$20$ & $1.27$e$-03$ & $3.84$e$-03$ \\
$25$ & $1.08$e$-03$ & $3.28$e$-03$ \\
$50$ & $5.01$e$-04$ & $1.51$e$-03$ \\
\hline
\end{tabular}\caption{Relative $L^2$-error and error bound for $\mathbf K=I$.}
\label{table_computation2}
\end{minipage}
\begin{minipage}{.48\linewidth}
\begin{tabular}{|c|c|c|}\hline
$\tilde n$  & $\left\|y-\tilde y\right\|_{L^2} /\left\|y\right\|_{L^2}$ & $\mathcal{EB}$\\
\hline
\hline
$1$ & $2.40$e$-03$& $5.48$e$-03$\\
$2$ & $3.25$e$-06$ & $1.34$e$-05$\\
$3$ &$2.94$e$-09$  & $2.98$e$-08$\\
$4$ &$3.76$e$-12$  & $2.76$e$-09$ \\
\hline
\end{tabular}\caption{Relative $L^2$-error and error bound for $\mathbf K=\mathbf{1}\mathbf{1}^T$.}
\label{table_computation3}
\end{minipage}
\end{table}

\begin{figure}[ht]
\begin{minipage}{0.48\linewidth}
 \hspace{-0.5cm}
\includegraphics[width=1.1\textwidth,height=6cm]{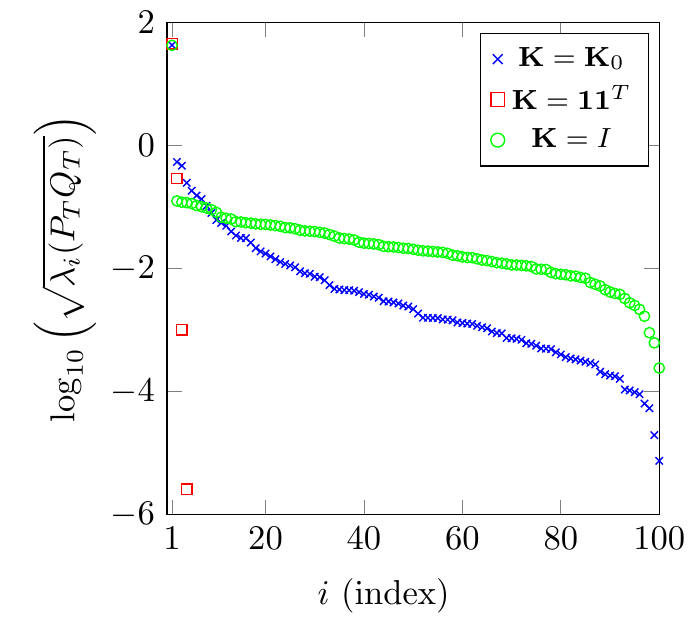}
\caption{Logarithmic HSVs of the large-scale asset model.}\label{hsvplot}
\end{minipage}\hspace{0.5cm}
\begin{minipage}{0.48\linewidth}
 \hspace{-0.5cm}
\includegraphics[width=1.1\textwidth,height=6cm]{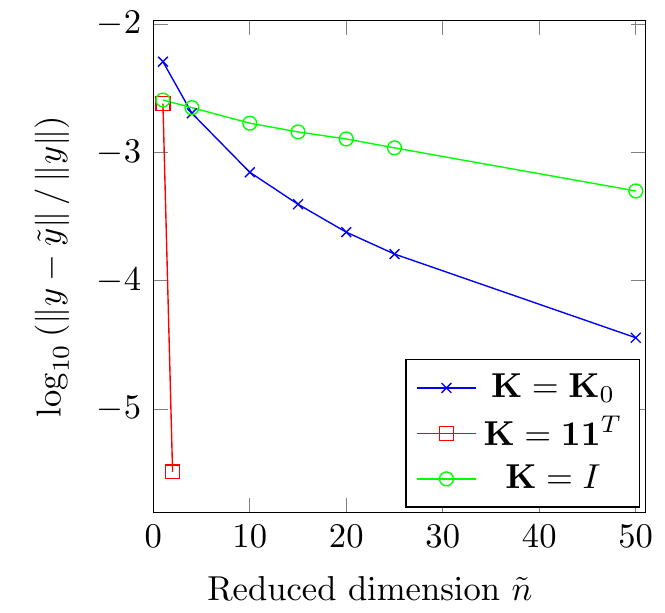}
\caption{Logarithmic relative $L^2$-error between $y$ and $\tilde y$ for $\tilde n\in\{1, 4, 10, 15, 20, 25, 50\}$ if $\mathbf K=\mathbf K_0, I$ and for $\tilde n=1, 2$ if $\mathbf K=\mathbf{1}\mathbf{1}^T$.}\label{errorK0}
\end{minipage}
\end{figure}

We conclude this subsection by briefly discussing the robustness of our algorithm in the parameter $\bar \sigma$ which is the volatility of the volatility processes $\bar v$ given in \eqref{unboundedvoleq}
 and the bound $c$ that that defines the truncated volatility process $v$. We consider the 
same framework as above apart from an enlarged volatility parameter which we choose $\bar \sigma = 0.25$ for the moment. As mentioned above, we practically simulate and use paths of $\bar v$ and subsequently fix 
the constant $c$ representing a bound for each of the simulated paths. Theoretically, we then work with $v$ since we need its boundedness to motivate the algorithm and in order to derive the error bound in Theorem 
\ref{thmerrorbound}. The new choice of $\bar \sigma$ of course leads to a larger $c$. According to the simulated paths, $c\geq 1.1$ is suitable. Picking $c=1.1$, we can see that the relative error in Table \ref{table_robust}
is basically the same as in Table \ref{table_computation} such that a higher volatility parameter does not really affect the relative error. Moreover, choosing $c=2$ gives us an error that is only slightly 
worse such that we observe a relatively good robustness in $c$. Interestingly, we can also select $c=0.6$ as a parameter in the Gramians $P_T$ and $Q_T$ and run the model reduction procedure based on these non admissible 
Gramians and even get a small gain. It might be because the majority of the paths of $\bar v$ stay below the bound of $c=0.6$. Therefore, most of the dynamics are still captured in the reduced system. 
Although we have seen a relatively robust scheme in the parameter $c$, we suggest to not choose it too small or large in practice.
\begin{table}[th]
\centering
\begin{tabular}{|c|c|c|c|}\hline
& \multicolumn{3}{c}{$\left\|y-\tilde y\right\|_{L^2} /\left\|y\right\|_{L^2}$} \vline \\
\hline 
$\tilde n$  & $c= 0.6$ & $c= 1.1$& $c=2$\\
\hline
\hline
$1$ & $5.23$e$-03$&$5.27$e$-03$ &$5.47$e$-03$\\   
$4$ &$2.06$e$-03$  &$2.09$e$-03$ &$2.22$e$-03$ \\
$10$ &$7.16$e$-04$  &$7.28$e$-04$ &$7.68$e$-04$\\
\hline
\end{tabular}\caption{Relative $L^2$-error for an enlarged volatility parameter $\bar \sigma = 0.25$ with $\mathbf K=\mathbf K_0$ using $c = 0.6, 1.1, 2$.}
\label{table_robust}
\end{table}



\subsection{Approximation error in the payoff}\label{weakapprox}

We have seen in Section \ref{numericspathwise} that the quantity of interest $y$ of system (\ref{original_system}), that we specified in (\ref{specialoutput}), can be well approximated by the output 
$\tilde y$ of the reduced system (\ref{generalreducedsys}) in a path-wise sense on some interval $[0, T]$. However, it is often of interest to consider weak errors instead. Therefore, we consider the following 
payoff function \begin{align*}
f(y) = \max\left\{y-K, 0\right\}                 
                \end{align*}
that plays a role in the context of European call options, where $K$ denotes the strike price. We compare the expected payoff $\mathbb E f\left(y(T)\right)$ at time $T$ with the one associated with 
the reduced system, which is $\mathbb E f\left(\tilde y(T)\right)$, for $K= \langle\mathbf 1, x_0\rangle_2$ in Figure \ref{payoffplot}. The relative errors in the expected payoff are larger if the
correlations between the noise 
processes are small. The approximation works best if $\mathbf K=\mathbf{1}\mathbf{1}^T$. As displayed in Figure \ref{payoffplot}, the error is around $5$e$-06$ for $\tilde n=2$ and our simulations also show an error of $5$e$-13$ already for $\tilde n=4$. 
If smaller correlations are involved, a larger $\tilde n$ needs to be chosen. However, selecting $4 \leq \tilde n \leq 10$ for $\mathbf K=\mathbf K_0$ already leads to a good estimate of the original payoff.  \smallskip

Looking at Table \ref{table1} we observe that weak error (error in the expected payoff) is of the same or of smaller error than the strong error in Table \ref{table_computation}. Moreover, it can be seen that 
the approximation in the payoff is better if the strike price is below the value of the basket $\langle\mathbf 1, x_0\rangle_2$ at time zero and it is worse if the strike price is above 
$\langle\mathbf 1, x_0\rangle_2$.\smallskip

So far, the weak error has not yet been analyzed concerning error bounds etc. We believe that it requires advanced techniques to succeed in this direction.
\begin{table}[th]
\centering
\begin{tabular}{|c|c|c|c|}\hline
& \multicolumn{3}{c}{$\left\vert \mathbb E f(y(T))-\mathbb E f(\tilde y(T))\right\vert /\left\vert \mathbb E f(y(T))\right\vert$} \vline \\
\hline 
$\tilde n$  & $K= 0.9\; \langle\mathbf 1, x_0\rangle_2$ & $K= \langle\mathbf 1, x_0\rangle_2$& $K= 1.1\; \langle \mathbf 1, x_0\rangle_2$\\
\hline
\hline
$1$ & $9.54$e$-04$&$1.10$e$-03$ &$1.98$e$-02$\\   
$4$ &$8.81$e$-04$  &$6.92$e$-04$ &$5.28$e$-03$ \\
$10$ &$2.78$e$-04$  &$4.13$e$-04$ &$8.80$e$-04$\\
$15$ &$3.41$e$-05$ & $8.63$e$-05$& $2.22$e$-04$\\
$20$ &$8.62$e$-06$ &$1.79$e$-05$ &$2.25$e$-05$\\
$25$ &$2.56$e$-06$ &$7.22$e$-06$ &$2.19$e$-05$\\
$50$ &$1.12$e$-07$ &$2.22$e$-07$ &$7.09$e$-07$ \\
\hline
\end{tabular}\caption{Relative error in the payoff function for $\mathbf K=\mathbf K_0$ and different strike prices $K$.}
\label{table1}
\end{table}
\begin{figure}[ht]
\centering
\includegraphics[width=7.5cm,height=7.5cm]{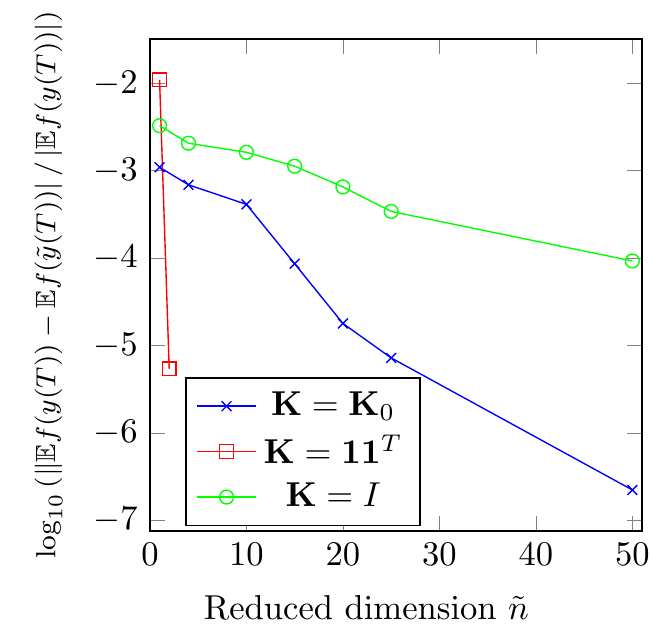}
\caption{Logarithmic relative error expected payoff for $K= \langle\mathbf 1, x_0\rangle_2$ and different reduced dimensions.}\label{payoffplot}
\end{figure}

\subsection{Applications to Bermudan options}
\label{sec:appl-berm-opti}

We have seen a good path-wise performance of our method in Section \ref{numericspathwise} and an even better approximation in the payoff in this section. Therefore, we see the potential of pricing 
high-dimensional Bermudan options with the help of MOR. Classical regression based schemes as in \cite{LS2001, TV2001} cannot accurately determine values of Bermudan options with high-dimensional 
underlying asset models due to the curse of dimensionality. However, reducing the asset price model in its dimension and subsequently applying the methods in  \cite{LS2001, TV2001} can be promising. 
To fix notation, consider an optimal stopping problem with possible
exercise date $\mathcal{J} \subset [0,T]$ with value
\begin{equation}
  \label{eq:optimal-stopping-value}
  u \coloneqq \sup_{\tau \in \mathcal{T}} \mathbb{E}[f(y(\tau))],
\end{equation}
where $\mathcal{T}$ denotes the set of all stopping times w.r.t.~the
filtration $(\mathcal{F}_t)_{t \in \mathcal{J}}$ generated by the Brownian
motions $\overline{B}, W_1, \ldots, W_q$ restricted to $\mathcal{J}$. (Here,
$\mathcal{J}$ is finite in the case of a \emph{Bermudan option}, while
$\mathcal{J} = [0,T]$ in the case of an \emph{American option}. In all cases,
we assume that $0 \in \mathcal{J}$.) Let us further consider the same problem
in the reduced model,
\begin{equation}
  \label{eq:optimal-stopping-reduced}
  \tilde{u} \coloneqq \sup_{\tau \in \mathcal{T}} \mathbb{E}[f(\tilde{y}(\tau))].
\end{equation}
Note that we choose the same class of stopping times for the full and for the
reduced model, guaranteeing that the optimal stopping time for either choice
of the model will be a sub-optimal stopping time for the other choice of
model.
                                                                     %

\begin{remark}
  \label{rem:randomized-stopping}
  In contrast to~\eqref{eq:optimal-stopping-value} and
  \eqref{eq:optimal-stopping-reduced}, one could also consider restricting the
  class of admissible stopping times to stopping times w.r.t.~the filtration
  $\mathbb{G} \coloneqq (\mathcal{G}_t)_{t \in \mathcal{J}}$ generated by $v$
  and $\tilde{x}$ for the reduced model and stopping times w.r.t.~the
  filtration $\mathbb{H} \coloneqq (\mathcal{H}_t)_{t \in \mathcal{J}}$
  generated by $v$ and $x$ for the full model, respectively. In general, this
  means that optimal stopping times for either model will not even be
  candidate stopping times for the other model. Often -- but not always --
  $\mathbb{H}$ will be an enlargement of $\mathbb{G}$, and under some
  conditions, this would allow us to understand stopping times for the full
  model as \emph{randomized stopping times} for the reduced model in the sense
  of \cite{gyongy2008randomized}. This implies that the optimal stopping time
  for the full model is a sub-optimal randomized stopping time for the reduced
  model, and its expected payoff is less or equal to the expected payoff of
  the optimal stopping time of the reduced model, w.r.t.~the smaller
  filtration. In general, however, the analysis of the behavior of American
  options for general model reductions within their own generated filtrations
  is beyond the scope of this paper.
\end{remark}
\begin{lem}\label{lem:reduced-american}
  Assume that the reduced model is close to the full model in the sense that
  \begin{equation}\label{eq:reduced-pathwise-L1}
    \mathbb{E}\left[ \sup_{t \in \mathcal{J}} \abs{f(y(t)) - f(\tilde{y}(t))}
    \right] \le \epsilon.
  \end{equation}
  We then have that
  \begin{equation*}
    \abs{u - \tilde{u}} \le \epsilon.
  \end{equation*}
\end{lem}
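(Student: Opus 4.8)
The plan is to exploit the fact that \eqref{eq:optimal-stopping-value} and \eqref{eq:optimal-stopping-reduced} are suprema over the \emph{same} family $\mathcal{T}$ of stopping times, together with the pathwise $L^1$-closeness \eqref{eq:reduced-pathwise-L1}, to obtain a two-sided estimate. First I would fix an arbitrary $\tau \in \mathcal{T}$. Since $\tau$ takes values in $\mathcal{J}$, the elementary pointwise bound
\begin{equation*}
  \abs{f(y(\tau)) - f(\tilde{y}(\tau))} \le \sup_{t \in \mathcal{J}} \abs{f(y(t)) - f(\tilde{y}(t))}
\end{equation*}
holds almost surely. Taking expectations and using \eqref{eq:reduced-pathwise-L1} gives $\mathbb{E}\abs{f(y(\tau)) - f(\tilde{y}(\tau))} \le \epsilon$.

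Next I would write, for this same $\tau$,
\begin{equation*}
  \mathbb{E}[f(y(\tau))] = \mathbb{E}[f(\tilde{y}(\tau))] + \mathbb{E}\bigl[f(y(\tau)) - f(\tilde{y}(\tau))\bigr] \le \mathbb{E}[f(\tilde{y}(\tau))] + \epsilon \le \tilde{u} + \epsilon,
\end{equation*}
where the last inequality is just the definition of $\tilde{u}$ as a supremum over $\mathcal{T}$, of which $\tau$ is a member. Since $\tau \in \mathcal{T}$ was arbitrary, taking the supremum over $\tau$ on the left yields $u \le \tilde{u} + \epsilon$. The reverse inequality $\tilde{u} \le u + \epsilon$ follows by the identical argument with the roles of $y$ and $\tilde{y}$ interchanged (note \eqref{eq:reduced-pathwise-L1} is symmetric in the two processes). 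Combining the two inequalities gives $\abs{u - \tilde{u}} \le \epsilon$.

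There is essentially no hard step here; the argument is a soft monotonicity/stability argument. The only points requiring a moment's care are: (i) that every $\tau \in \mathcal{T}$ is $\mathcal{J}$-valued, so the pathwise supremum bound legitimately dominates $\abs{f(y(\tau)) - f(\tilde{y}(\tau))}$ — this is exactly why the common class $\mathcal{T}$ generated by the Brownian motions (rather than the model-specific filtrations discussed in Remark~\ref{rem:randomized-stopping}) is used; and (ii) integrability of $f(y(\tau))$ and $f(\tilde{y}(\tau))$, so that the additive splitting of the expectation is valid — this is guaranteed by \eqref{eq:reduced-pathwise-L1} once one knows $\mathbb{E}[\sup_{t\in\mathcal{J}} \abs{f(y(t))}] < \infty$, which holds for the Lipschitz payoffs under consideration. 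No optimal stopping time needs to be attained, since the estimate is uniform in $\tau$ and we conclude by taking suprema.
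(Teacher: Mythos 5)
Your proof is correct and follows essentially the same route as the paper's: both exploit the fact that $u$ and $\tilde u$ are suprema over the \emph{same} class $\mathcal{T}$ of $\mathcal{J}$-valued stopping times, so that for any candidate $\tau$ the pathwise supremum bound \eqref{eq:reduced-pathwise-L1} yields $\abs{\mathbb{E}[f(y(\tau))] - \mathbb{E}[f(\tilde{y}(\tau))]} \le \epsilon$, and then each optimizer is sub-optimal for the other problem. The only (minor, and slightly favorable) difference is that you argue uniformly in $\tau$ and take suprema at the end, whereas the paper works directly with the optimal stopping times $\tau^\ast$ and $\tilde{\tau}^\ast$, implicitly assuming these are attained.
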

\begin{proof}
  Let $\tau^\ast$ denote the optimal stopping time for the full problem, i.e.,
  $u = \mathbb{E}[f(y(\tau^\ast))]$. Similarly, let $\tilde{\tau}^\ast$
  denote the optimal stopping time for the reduced problem. We have
  \begin{align*}
    u - \tilde{u} &= \mathbb{E}\left[ f(y(\tau^\ast)) \right] -
                    \mathbb{E}\left[ f\left( \tilde{y}\left( \tilde{\tau}^\ast
                    \right) \right) \right] \\
    &= \underbrace{\mathbb{E}\left[ f(y(\tau^\ast)) \right] - \mathbb{E}\left[ f\left(
      y\left( \tilde{\tau}^\ast \right) \right) \right]}_{\ge 0} + \underbrace{\mathbb{E}\left[ f\left( y\left( \tilde{\tau}^\ast \right)
      \right) \right] - \mathbb{E}\left[ f\left( \tilde{y}\left( \tilde{\tau}^\ast
      \right) \right) \right]}_{\ge - \epsilon}\\
    &\ge - \epsilon,
  \end{align*}
  since $\tilde{\tau}^\ast$ is a sub-optimal stopping time for $f(y)$.

  For the upper bound, we note that $\tau^\ast$ is a
  sub-optimal stopping time for $f(\tilde{y})$
  and obtain
  \begin{align*}
    u - \tilde{u} &= \mathbb{E}\left[ f(y(\tau^\ast)) \right] -
                    \mathbb{E}\left[ f\left( \tilde{y}\left( \tilde{\tau}^\ast
                    \right) \right) \right] \\
    &= \underbrace{\mathbb{E}\left[ f(y(\tau^\ast)) \right] - \mathbb{E}\left[ f\left(
      \tilde{y}\left( \tau^\ast \right) \right) \right]}_{\le \epsilon} + \underbrace{\mathbb{E}\left[ f\left( \tilde{y}\left( \tau^\ast \right)
      \right) \right] - \mathbb{E}\left[ f\left( \tilde{y}\left( \tilde{\tau}^\ast
      \right) \right) \right]}_{\le 0}\\
    &\le \epsilon,
  \end{align*}
 since $\tilde{\tau}^\ast$ is optimal for $f(\tilde y)$.
\end{proof}

\begin{remark}
Unfortunately, the $L^2$-error bound obtained in Theorem \ref{thmerrorbound} is not a candidate for $\epsilon$ in \eqref{eq:reduced-pathwise-L1} since we cannot bound the $L^\infty$-error by the $L^2$-error. 
However, we can usually estimate $\mathbb{E}\left[ \sup_{t \in \mathcal{J}} \abs{f(y(t)) - f(\tilde{y}(t))}\right]$ empirically by sampling $y$ and $\tilde y$.
Indeed, for numerical purposes, we may always assume that $\mathcal{J}$ is finite, such that the $L^\infty$-norm for a given trajectory is easily computable.
Additionally, we note that estimating $\mathbb{E}\left[ \sup_{t \in \mathcal{J}} \abs{f(y(t)) - f(\tilde{y}(t))}\right]$ is, of course, much easier than estimating $\abs{u - \tilde{u}}$, as no optimal stopping problem needs to be solved.
This type of approximation for the bound between the option prices $u$ and $\tilde u$ is also used in the numerical example below.
Theoretical error bounds of the form \eqref{eq:reduced-pathwise-L1} would require very different techniques than applied in the proof of Theorem \ref{thmerrorbound} and are left for further research.    
\end{remark}

We now create an example, in which the error in \eqref{eq:reduced-pathwise-L1} is already relatively low for a reduced dimension $\tilde n = 5$, since this gives us the certainty that the value of the Bermudan option $\tilde u$ in 
the reduced model is close to the actual value $u$ by Lemma \ref{lem:reduced-american}. To do so, we modify equation \eqref{scholesnum} by choosing $n=30$, a constant volatility and dividends
resulting in the following Black-Scholes model: \begin{align}\label{scholesnum2}
             dx_i(t) =(\mathbf{\mathrm{r}} - \delta) x_i(t)dt+ \xi x_i(t) dW_i(t),\quad x_i(0)=x_{0, i},\quad t\in [0, T],
            \end{align}
where $\mathbf{\mathrm{r}} = 0.02$, $\delta = 0.07$, $\xi = 0.2$, $T=1$ and $i= 1, \ldots, n$. The noise processes are correlated. Their covariance matrix is generated the same way as $\mathbf K_0$ above and we choose randomly generated initial values 
$x_{0, i}\in [0.15, 2.5]$. The quantity of interest is chosen as before, i.e., \begin{align*}
                                                                    y(t) = \sum_{i=1}^n x_i(t)
                                                                               \end{align*}
and the following discounted payoff function is considered \begin{align*}
f(y(t)) = \expn^{-\mathbf{\mathrm{r}} t} \max\left\{y(t)-K, 0\right\}                 
                \end{align*}
with $K = \langle\mathbf 1, x_0\rangle_2$. We have five exercise dates of the associated Bermudan option, i.e., $\mathcal{J} = \left\{0, 0.25, 0.5, 0.75, 1\right\}$.

We apply the MOR technique described in Section \ref{subsecprocedure} in order to obtain the reduced system \eqref{generalreducedsys} with dimensions $\tilde n = 1, \ldots, 5$, Subsequently, we determine 
the error in \eqref{eq:reduced-pathwise-L1}. These errors are stated in the third column in Table \ref{table_bermudan} showing that we are already very close to the actual value $u$ using a reduced system 
with dimension $\tilde n = 5$. In order to determine the fair price for the Bermudan option in the reduced model ($\tilde n = 1, \ldots, 5$), we apply the algorithm of Longstaff and Schwartz \cite{LS2001}. Within this 
regression approach Hermite polynomials of absolute order up to $5$ are used as a basis. Moreover, the payoff function is included in the basis as well. This leads to the values $\tilde u$ in the second column of 
Table \ref{table_bermudan}. The gain in $\tilde u$ from $\tilde n = 1$ to $\tilde n=5$ is relatively low. It is hard to distinguish between both values knowing that the standard deviation of this estimation is 
$0.00105$ which is the same order as the gain. Observing the values $\tilde u$ in Table \ref{table_bermudan} for different reduced order dimensions, it seems that the corresponding bound is not very tight such that 
we suppose that the deviation between $u$ and $\tilde u$ is much lower than $10^{-2}$ when choosing $\tilde n = 5$.
\begin{table}[th]
\begin{tabular}{|c|c|c|}\hline
$\tilde n$  & Value $\tilde u$ of Bermudan option reduced system & $\mathbb{E}\left[ \sup_{t \in \mathcal{J}} \abs{f(y(t)) - f(\tilde{y}(t))}\right]$\\
\hline
\hline
$1$ & $0.99173$& $0.079619$\\
$2$ & $0.99260$ & $0.049844$\\
$3$ & $0.99313$ & $0.025734$\\
$4$ & $0.99295$ & $0.016036$ \\
$5$ & $0.99354$ & $0.012181$ \\
\hline
\end{tabular}\caption{Value Bermudan option based on the reduced asset price model corresponding to \eqref{scholesnum2} and associated error bounds from Lemma \ref{lem:reduced-american}.}
\label{table_bermudan}
\end{table}
Moreover, notice that the value of the European option in the above asset price model is $0.87980$, such that there is a significant difference between both option prices. This makes our method beneficial since computing 
the Bermudan option price in the reduced model leads to a large gain in comparison to the European option price in the full model which would be a good estimator if the values of both types of options do not deviate too much.      
            
\section{Conclusions and outlook}
\label{sec:conclusions}

In this paper, we have shown that model order reduction (MOR) can be an effective technique to construct lower-dimensional surrogate models of large-scale financial models.
These surrogate models can be tackled by higher-order computational methods than Monte Carlo simulation. We
construct a specific path-wise MOR method, and test it in a multi-dimensional
Heston model. The MOR turns out to work very well for European basket option
pricing, especially when the individual assets are strongly correlated (a very
realistic scenario). For instance, in a common Doust-type correlation regime,
for $n = 100$ assets, a reduced model with dimension $\tilde n = 1$ was able to
capture the price of an ITM basket option up to a relative error of $10^{-3}$,
whereas for an OTM option we obtained the same error bound with $\tilde n = 10$, which is
still a very significant dimension reduction.

Of course, this paper only scratches the surface of applications of MOR in
finance. In particular, we identify two very relevant extensions that will be
highly beneficial in a financial context. On the one hand, consider that we
have restricted ourselves to linear dynamics and essentially linear payoff
functions -- in the sense that the payoff is assumed to be a non-linear
function of a low-dimensional projection of the full price process. Both
restrictions can be quite relevant in finance. Allowing non-linear dynamics
opens up the possibility of including the stochastic variance process in the
model order reduction, as well as having local volatility
components. Techniques for MOR in non-linear dynamics have already been
developed in the deterministic case \cite{morBenG17, morBenG19, KramerWilcox},
and have been extended to stochastic differential equations in some special
cases \cite{redstochbil}. General non-linear payoff functions are also
relevant in finance, think of max-call options. One strategy already available
in our framework is to choose $C$ to be the identity matrix. 

On the other hand, note that the MOR framework developed in this paper is
strong in the probabilistic sense, i.e., we try to approximate the process
itself. In many financial application, we are interested in
weak approximations, i.e., we want to approximate the distribution of the
process. As this is a much weaker concept, even better MOR techniques are
conceivable. However, developing an appropriate framework does not seem
obvious, and it is unclear how to proceed in this direction.

\section*{Acknowledgments}
The authors would like to thank the anonymous reviewers for their helpful, constructive and detailed comments that greatly contributed to improving the paper. 

\appendix

\section{Resolvent positive operators}\label{appendixbla}

Let $\left(H^n, \langle \cdot, \cdot\rangle_F\right)$ be the Hilbert space of symmetric $n\times n$ matrices,
where $\langle M_1, M_2\rangle_F:= \trace(M_1^T M_2)$ is the Frobenius inner product of two matrices $M_1$ and $M_2$. The corresponding norm is defined by $\left\|M_1\right\|_F^2:= \langle M_1, M_1\rangle_F$.
Moreover, let $H^n_+$ be the subset of symmetric positive semidefinite matrices. We now define positive and resolvent positive operators on $H^n$.
\begin{defn}
A linear operator $L: H^n\rightarrow H^n$ is called positive if $L(H^n_+)\subset H^n_+$. It is resolvent positive
if there is an $\alpha_0\in\mathbb R$ such that for all $\alpha>\alpha_0$ the operator $(\alpha I - L)^{-1}$ is positive.
\end{defn}\\
The operator $\mathcal L(X):=AX + X A^T$ is resolvent positive for $A\in\mathbb R^{n\times n}$ which is, e.g., shown in \cite{damm}.
Moreover, $\Pi(X):=c \sum_{i, j=1}^q N_i X N_j^T k_{ij}$ is positive for $N_i\in\mathbb R^{n\times n}$ by \cite[Proposition 5.3]{redmannspa2}. This implies that the generalized Lyapunov operator 
$\mathcal L + \Pi$ is resolvent positive. We now state an equivalent characterization for resolvent positive operators in the following. It can be found in a
more general form in \cite{damm, elsner, schneidervid}.
\begin{thm}\label{equiresolpos}
A linear operator $L: H^n\rightarrow H^n$ is resolvent positive if and only if $\langle V_1, V_2\rangle_F=0$ implies $\langle L V_1, V_2\rangle_F\geq 0$ for $V_1, V_2\in H^n_+$.
\end{thm}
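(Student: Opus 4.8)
The plan is to make both conditions in the theorem equivalent to positivity of the semigroup $(\expn^{tL})_{t\ge 0}$, i.e.\ to $\expn^{tL}(H^n_+)\subseteq H^n_+$ for every $t\ge 0$, and to move between the resolvent and the semigroup through the standard exponential formulas available in finite dimensions.

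For ``resolvent positive $\Rightarrow$ cross-positive'', I would first note $\expn^{tL}=\lim_{k\to\infty}\bigl(\tfrac kt(\tfrac kt I-L)^{-1}\bigr)^{k}$; for $k$ large we have $\tfrac kt>\alpha_0$, so each factor is positive, hence so is the $k$th power, and since $H^n_+$ is closed the limit $\expn^{tL}$ is positive. Then, given $V_1,V_2\in H^n_+$ with $\langle V_1,V_2\rangle_F=0$, set $\varphi(t):=\langle\expn^{tL}V_1,V_2\rangle_F=\trace\bigl((\expn^{tL}V_1)V_2\bigr)$. Since the Frobenius pairing of two positive semidefinite matrices is nonnegative, $\varphi\ge 0=\varphi(0)$ on $[0,\infty)$, so $t=0$ minimizes $\varphi$ there and therefore $0\le\varphi'(0)=\langle LV_1,V_2\rangle_F$, which is the claim.

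For the converse, once $\expn^{tL}(H^n_+)\subseteq H^n_+$ is established, the identity $(\alpha I-L)^{-1}=\int_0^\infty\expn^{-\alpha t}\expn^{tL}\,dt$, valid for $\alpha$ sufficiently large, exhibits $(\alpha I-L)^{-1}$ as an average of positive operators against a positive measure, hence it is positive and $L$ is resolvent positive. The real content is therefore the \emph{invariance} $\expn^{tL}(H^n_+)\subseteq H^n_+$. Fix $V\in H^n_+$ and $\varepsilon>0$ and study $w_\varepsilon(t):=\expn^{tL}V+\varepsilon\expn^{\mu t}I$, which is positive definite at $t=0$. If $w_\varepsilon$ ever leaves the cone of positive definite matrices, let $t_0>0$ be the first exit time; by continuity $w_\varepsilon(t_0)$ is singular positive semidefinite, so $w_\varepsilon(t_0)v=0$ for some $v\neq 0$. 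The scalar $h(t):=v^{T}w_\varepsilon(t)v$ is positive on $[0,t_0)$ and vanishes at $t_0$, hence $h'(t_0)\le 0$; on the other hand, substituting $\expn^{t_0L}V=w_\varepsilon(t_0)-\varepsilon\expn^{\mu t_0}I$ gives $h'(t_0)=\langle L(w_\varepsilon(t_0)),vv^{T}\rangle_F-\varepsilon\expn^{\mu t_0}v^{T}L(I)v+\varepsilon\mu\expn^{\mu t_0}\|v\|_2^2$, where the first term is $\ge 0$ by the cross-positivity hypothesis applied to $w_\varepsilon(t_0),vv^{T}\in H^n_+$ (which pair to $h(t_0)=0$), and choosing $\mu>\|L(I)\|_F$ makes the remaining two terms strictly positive — a contradiction. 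Thus $w_\varepsilon$ stays positive definite for all $t\ge 0$, and letting $\varepsilon\to 0$ yields $\expn^{tL}V\in H^n_+$.

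The main obstacle is this last invariance step, which is a linear, finite-dimensional instance of Nagumo's viability theorem; its delicate points are the choice of the perturbation $\varepsilon\expn^{\mu t}I$ (it must start the curve strictly inside the cone and let the growth rate $\mu$ absorb the drift term $v^{T}L(I)v$) and the correct handling of the exit time $t_0$ (that $t_0>0$ uses openness of the positive definite cone, and the singular positive semidefinite matrix $w_\varepsilon(t_0)$ is precisely what produces the test vector $v$). The cross-positivity hypothesis is used exactly once, and only through rank-one test matrices $vv^{T}$; everything else — the exponential and resolvent formulas, and $\trace(M_1M_2)\ge 0$ for $M_1,M_2\in H^n_+$ — is routine.
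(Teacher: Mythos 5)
Your proof is correct, and it is worth noting that the paper does not actually prove this theorem at all: it is stated and justified only by citation to Damm, Elsner, and Schneider--Vidyasagar, so there is no internal proof to compare against. What you supply is the classical argument behind those references, routed through the equivalence of resolvent positivity with invariance of $H^n_+$ under the semigroup $(\expn^{tL})_{t\ge 0}$: the Euler formula and the Laplace-transform representation of the resolvent handle the translation between resolvent and semigroup, the inequality $\trace(M_1M_2)\ge 0$ for $M_1,M_2\in H^n_+$ plus differentiation at the boundary minimum $t=0$ gives cross-positivity, and the Nagumo-type first-exit argument with the perturbation $\varepsilon\expn^{\mu t}I$ (with $\mu$ large enough to absorb $v^TL(I)v$) gives the invariance. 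All the individual steps check out: $t_0>0$ by openness of the positive definite cone, $w_\varepsilon(t_0)$ singular psd by closedness, $h'(t_0)\le 0$ from one-sided monotonicity, and the cross-positivity hypothesis applied to the orthogonal pair $w_\varepsilon(t_0), vv^T\in H^n_+$. It is a pleasant consistency check that this invariance step is essentially the same perturbation/first-exit-time device the paper itself uses in Appendix B.2 to prove the Gronwall-type Lemma 2.3, where resolvent positivity is consumed precisely through Theorem B.2 applied to a rank-one test matrix $u_0u_0^T$; your proposal shows that the cited characterization can be established self-containedly by the very technique the paper already deploys downstream of it.
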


\section{Pending proofs}
We prove Lemmas \ref{lemforstab} and \ref{lem3} in the following two subsections.

\subsection{Proof of Lemma \ref{lemforstab}}\label{secprooflem2}

We apply Ito's product rule to $x(t) x^T(t)$ and obtain
\begin{align*}
 d\left(x(t) x^T(t)\right)= dx(t) x^T(t) + x(t) dx^T(t) + dx(t) dx^T(t).
\end{align*}
Inserting (\ref{stochstatenew}) yields \begin{align}\label{usualprod}
  dx(t) x^T(t) + x(t) dx^T(t) = &Ax(t)x^T(t)dt+\sum_{i=1}^q \sqrt{v(t)}N_i x(t)x^T(t) dW_i(t) \\ \nonumber
  &+ x(t) x^T(t) A^T dt+\sum_{i=1}^q \sqrt{v(t)}x(t) x^T(t)N_i^T dW_i(t).
\end{align}
With (\ref{stochstatenew}) and using that $dW_i(t) dW_j(t) = k_{ij} dt$, we find \begin{align}\label{quadvarterm}
 dx(t) dx^T(t) = v(t)\sum_{i, j=1}^q N_i x(t) x^T(t) N_j^T k_{ij} dt.
\end{align}
Let $e_i$ denote the $i$th unit vector. Then, we have
\begin{align*}
  k_{ij}=e_i^T \mathbf K^{\frac{1}{2}} \mathbf K^{\frac{1}{2}} e_j= \sum_{k=1}^q \langle \mathbf K^{\frac{1}{2}} e_i, e_k\rangle_2 \langle \mathbf K^{\frac{1}{2}}e_j , e_k\rangle_2.
  \end{align*}
Using this fact, we obtain that \begin{align*}
\sum_{i, j=1}^q N_i x(t) x^T(t) N_j^T k_{ij} = \sum_{k=1}^q \left(\sum_{i=1}^q N_i x(t) \langle \mathbf K^{\frac{1}{2}} e_i, e_k\rangle_2\right) 
\left(\sum_{j=1}^q N_j x(t)\langle \mathbf K^{\frac{1}{2}} e_j, e_k\rangle_2\right)^T \geq 0
                                \end{align*}
is a positive semidefinite matrix. Hence, we can enlarge the right-side of (\ref{quadvarterm}) by replacing $v$ by its bound $c$. This leads to 
\begin{align}\label{ineqquadvarterm}
 dx(t) dx^T(t) \leq c \sum_{i, j=1}^q N_i x(t) x^T(t) N_j^T k_{ij} dt.
\end{align}
We apply the expected value to both sides of (\ref{usualprod}) and (\ref{ineqquadvarterm}). Since the Ito integrals have mean zero, we have
\begin{align*}
 \frac{d}{dt}\mathbb E \left[x(t) x^T(t)\right]\leq A\mathbb E \left[x(t)x^T(t)\right]+ \mathbb E \left[x(t)x^T(t)\right]A^T + c\sum_{i, j=1}^q N_i \mathbb E \left[x(t) x^T(t)\right] N_j^T k_{ij},
\end{align*}
which concludes the proof.

\subsection{Proof of Lemma \ref{lem3}}\label{secprooflem3}

We combine (\ref{matrix_ineq}) with (\ref{matrix_eq}) and obtain 
\begin{align*}
\dot Y(t) \geq L(Y(t)),
\end{align*}
where $Y:= Z-X$.
We define the difference function $D(t) := \dot Y(t) - L(Y(t)) \geq 0$ and consider the following perturbed differential equation 
\begin{align*}
\dot Y_\epsilon (t) = L(Y_\epsilon(t)) + D(t) + \epsilon I
\end{align*}
with parameter $\epsilon\geq 0$ and initial state $Y_\epsilon (0)= Y(0) + \epsilon I$. We see that $Y_0(t)= Y(t)$ for all $t\in [0, T]$ since $Y_0 - Y$ solves (\ref{matrix_eq})  with initial condition zero.
Since $Y_\epsilon$ continuously depends on $\epsilon$ and the initial data, we have $\lim_{\epsilon \rightarrow 0} Y_\epsilon(t) = Y_0(t) = Y(t)$ for all $t\in [0, T]$.
\smallskip

We want to prove that $Y_\epsilon(t)$ is positive definite for all $t$ and all $\epsilon>0$. To do so, let us assume the converse, i.e., 
there is a $\tilde u\ne 0$ and a $\tilde t>0$ such that $\tilde u^T Y_\epsilon(\tilde t) \tilde u \leq 0$.
We know that $f_\epsilon(u, t):= u^T Y_\epsilon(t)  u$ is positive at $t=0$ for all $u\in\mathbb R^n\setminus \{0\}$ since $Y(0)\geq0$ by assumption.
Since $f_\epsilon$ is non-positive in some point $(\tilde u, \tilde t)$ and due to the continuity of $t\mapsto Y_\epsilon(t)$, there is a point $t_0\in (0, \tilde t]$ 
for which \begin{align}\label{contradicttionprop}
u_0^T Y_\epsilon(t_0) u_0 = 0 \quad \text{and} \quad u_0^T Y_\epsilon(t) u_0 > 0,\quad t<t_0,
\end{align}
for some $u_0\ne 0$, whereas $u^T Y_\epsilon(t_0) u\geq 0$ for all other $u\in\mathbb R^n$. Since $L$ is resolvent positive, $0 = u_0^T Y_\epsilon(t_0) u_0 = \langle Y_\epsilon(t_0), u_0u_0^T\rangle_F$ implies
$0\leq \langle L(Y_\epsilon(t_0)), u_0u_0^T\rangle_F = u_0^T L(Y_\epsilon(t_0))u_0$ by Theorem \ref{equiresolpos}. Hence, we have 
\begin{align*}
\left. u_0^T \dot Y_\epsilon(t) u_0\right\vert_{t=t_0} = u_0^T L(Y_\epsilon(t_0)) u_0 +u_0^T D(t_0) u_0 + \epsilon \left\|u_0\right\|_2^2 > 0.
\end{align*}
Consequently, we know that there are $t<t_0$ close to $t_0$ for which $u_0^T Y_\epsilon(t) u_0<0$. This contradicts (\ref{contradicttionprop}) and hence our assumption is wrong such that 
$Y_\epsilon(t)$ is positive definite for all $t\in [0, T]$ and $\epsilon>0$. Taking the limit of $\epsilon \rightarrow 0$, we obtain $Y(t)\geq 0$ for all $t\in [0, T]$ which concludes the proof.

\bibliographystyle{plain}

\end{document}